\newcommand{\removed}[1] {\ifmmode{\color{red}\cancel{ #1}}\else{\color{red}\sout{#1}}\fi}
\newcommand{\tnorm}[1]{\ensuremath{\left| \! \left| \! \left|} #1\ensuremath{\right| \! \right| \! \right|}}
\def\sjump#1{[\hskip -.5pt #1 \hskip -0.5pt]}
\def\cT{\mathcal{K}}
\def\cT{\mathcal{T}}
\def\cV{\mathcal{V}}
\def\cF{\mathcal{F}}
\def\cW{\mathcal{W}}
\def\L{\mathcal{L}}
\newcommand{\dx}{\;\textit{dx}}
\newcommand{\ds}{\;\textit{ds}}
\newcommand{\norm}[1]{\left\Vert #1 \right\Vert}
\newtheorem{corollary}{Corollary}[section]
\newtheorem{theorem}{Theorem}[section]
\newtheorem{lemma}{Lemma}[section]
\title{Data assimilation finite element method for the linearized Navier-Stokes equations with higher order polynomial approximation}
\author{Erik Burman\thanks{Department of Mathematics, University College London, London, UK--WC1E 6BT, UK.; {\tt e.burman@ucl.ac.uk}}  \ and Deepika Garg\thanks{Department of Mathematics, University College London, London, UK--WC1E 6BT, UK.; {\tt d.garg@ucl.ac.uk}} \ and Janosch Preuss\thanks{Department of Mathematics, University College London, London, UK--WC1E 6BT, UK.; {\tt j.preuss@ucl.ac.uk}}} 
\date{\today} 
\begin{document}
\maketitle
\begin{abstract}
In this article, we design and analyze an arbitrary-order stabilized finite element method to approximate the unique continuation problem for laminar steady flow described by the linearized incompressible Navier--Stokes equation. We derive quantitative local error estimates for the velocity, which account for noise level and polynomial degree, using the stability of the continuous problem in the form of a conditional stability estimate. Numerical examples illustrate the performances of the method with respect to the polynomial order and perturbations in the data. We observe that the higher order polynomials may be efficient for ill-posed problems, but are also more sensitive for problems with poor stability due to the ill-conditioning of the system. 
\end{abstract}


{\bf Key words:} linearized Navier--Stokes’ equations, data assimilation, stabilized finite element methods,  error estimates


\section{Introduction}
The question of how to assimilate measured data into large-scale computations of flow problems is receiving increasing attention from the computational mathematics community \cite{HMM10, DPV12, BCFGM13, SD18, GN20, ASSS22}. There are several
different situations where such data assimilation problems as can be seen in the above examples. One situation is when the data necessary to make the flow problem well-posed is lacking, for instance, when the data on the boundary of the
the domain is unknown; instead, measurements are available in some subset of the bulk domain or boundary to make up for this shortfall. In such a case, the problem is typically ill-posed, and numerical simulations are significantly more challenging to perform than when handling well-posed flow problems.
Ill-posed problems usually come up in inverse problems and data assimilation.  Traditionally, these ill-posed problems have been solved by regularizing at the continuous level, using e.g. Tikhonov regularization \cite{TA1977} or quasi-reversibility \cite{LL1967}. 
The regularized problem is well-posed and may be discretized using any appropriate numerical technique. Then, the regularization parameter must be tuned to the optimal value for the noise in the data. 
There is considerable literature of research on Tikhonov regularization and inverse problems, and we suggest the reader to \cite{IB2015} and its references for an overview of computational approaches employing this strategy. The quasi-reversibility methods relevant to the current study may be found in \cite{BL2005,BL2006,DH2013, BD14}. 

The goal of the current contribution is to develop a finite element approach directly applied to the ill-posed variational data assimilation form. Regularization is then introduced at the discrete level utilizing stabilized finite element methods that allow for a comprehensive analysis employing conditional stability estimates. The idea is presented in \cite{EB2013} for standard $H^1$-conforming finite element methods.  Ill-posed problems are analyzed in \cite{EB2014}, and in \cite{EB2017}, the technique is extended to nonconforming approximations. In both cases, low-order approximation spaces are considered. The error analysis requires the availability of sharp conditional stability estimates for the continuous problem. The estimates are conditional in the sense that a particular a priori bound must be assumed to hold for the solution, and the continuity provided in this bound is often merely H\"older \cite{john1960continuous}. In the literature, these estimates are referred to as quantitative uniqueness results and employ theoretical methods such as Carleman estimates or three-ball estimates \cite{ALE2009,IV2006}. Error bounds derived using conditional stability estimates can be optimal because they reflect the approximation order of the finite element space and the stability of the ill-posed problem.  In particular, when applied to a well-posed problem, the finite element method recovers optimal convergence.

The ill-posed problem that we consider here is the unique continuation problem. The unique continuation problem for the Stokes equations was initially studied in \cite{fabre1996prolongement}. The analysis of the stability properties of ill-posed problems based on the Navier–Stokes equations is a very active field of research, and we refer to the works \cite{BC2016,AB2010,BIY2016,BEG2013,IM2015,IYM2015,LUW2010} for recent results.

This study aims to determine whether using high-order methods in the primal-dual stabilized Galerkin methods is as helpful in the ill-posed case as in the well-posed situation. Inspired by the approach proposed in \cite{Boulakia:2020:Burman} for the lowest-order finite element discretization of the unique continuation problem subject to the Navier-Stokes equations, here we generalize the method to arbitrary polynomial orders and investigate the benefits of using higher-order polynomials in numerical experiments.

The rest of the paper is organized as follows. In section \ref{sec2}, we introduce the considered inverse problem and some related stability estimates. In section \ref{fem_method}, we describe the proposed stabilized finite element approximation of the data assimilation problem and state the local error estimate. The numerical analysis of the method is carried out in section \ref{sec4}. Finally, section \ref{sec5} presents a series of numerical examples which illustrate the performance of the proposed
method.

\section{The linearized Navier--Stokes problem} \label{sec2}
Let $\Omega$ be an open polygonal (polyhedral) domain in $\mathbb{R}^d$, $d=2,3$.  Let $(U, P)$ be the solution of the stationary incompressible Navier–Stokes equations and consider some perturbation $(u, p)$ of this base flow. If the quadratic term is ignored, the linearized Navier--Stokes equations for $(u, p)$ can be written
\begin{align} 
\mathcal{L}(u,p)  &= {f}; \  \text{\ in} \  \Omega, \label{stoke} \\ \quad \nabla \cdot{u} &= 0   \  \  \text{\ in} \  \Omega,  \label{stoke_1}
\end{align}
where
\begin{align*}
	\mathcal{L}(u,p)=(U \cdot \nabla)u+(u \cdot \nabla)U-\nu \Delta{u} + \nabla p.
\end{align*}
Here, $\nu$ is a diffusion coefficient. We assume that $U$ belongs to $[W^{1,\infty}(\Omega)]^d$ and that $(u, p)$ satisfies the regularity
\[(u,p) \in [H^{2}(\Omega)]^d \times H^{1}(\Omega).\]

For this problem, we assume that measurements on $u$ are available in some subdomain
$\omega_M \subset \Omega$ having a nonempty interior and our purpose is to reconstruct a fluid flow perturbation of $u$ for system (\ref{stoke})--(\ref{stoke_1}) based on the measurements of velocity.

Now, we will present some useful notations. Consider the following spaces:
\[V:=[H^{1}(\Omega)]^d, \ V_0:=[H_0^{1}(\Omega)]^d, \ L_0 :=L^{2}_{0}(\Omega), \ \text{and} \ L :=L^{2}(\Omega) \]
where $L^{2}_{0}(\Omega)=\{ p \in L^{2}(\Omega): \int_{\Omega}p=0\}$. We also define the norms, for $k = 1 \ \text{or} \ d$,
\[\norm{\cdot}_L:=\norm{\cdot}_{[L^{2}(\Omega)]^k}, \ \norm{\cdot}_V:=\norm{\cdot}_{[H^{1}(\Omega)]^k}, \ \norm{\cdot}_{V_{0}^{'}}:=\norm{\cdot}_{[H^{-1}(\Omega)]^k}.\]
Observe that in the definitions, we employ the same notation for $k = 1 \ \text{and} \ k = d.$
 For any subdomain $X \subset \Omega$, we set
\begin{align*}
|v|_{X} :=\left(\int_{X}|v|^{2}\right)^{\frac{1}{2}}, \forall \ v \in [L^{2}(X)]^d.
\end{align*}
Next, define the bilinear forms as: for all $(u, v) \in V \times V$
\begin{align}\label{bi_a}
a(u,v):=\int_{\Omega}((U \cdot \nabla)u+(u \cdot \nabla)U) \cdot v+ \nu \int_{\Omega} \nabla{u} : \nabla{v},
\end{align}	
where $H:G := \sum^{d}_{i,j=1}H_{i,j}G_{i,j}$ and, for all $(p, v) \in L \times V
$
\begin{align}\label{bi_b}
b(p,v):&=\int_{\Omega}p \nabla \cdot v,\\
l(v):&=\int_{\Omega}f  \cdot v.
\end{align}
The weak form of  the inverse problem can be expressed as: $f \in V^{'}_{0}$, $u|_{\omega_M}$
being given, find $(u, p) \in V \times L_0 $ such that
\begin{align}\label{omega}
	u=q\ \text{in} \ \ \omega_M
\end{align}
and
\begin{align}\label{weak_form}
	a(u,v)-b(p,v)+b(r,u)= \langle f,v \rangle_{V^{'}_{0},V_0}, \  \ \forall \ (v,r) \in V_0 \times L.
\end{align}
Here, $q \in [H^1(\omega_M)]^d $ corresponds to the exact fluid velocity on $\omega_M$, i.e. $q$ is a solution to the linearized Navier-Stokes' equations in $\omega_M$ and has an extension $u$ to all of $\Omega$. Below in the finite element method we will assume that we do not have access to $q$, but only some measured velocities $u_M = q + \delta u$. So $u_M$  corresponds to the exact velocity polluted by a small noise $\delta u \in [L^2(\omega_M)]^d$.

Consider the linearized Navier–Stokes problem with a non-zero velocity divergence
\begin{align} 
	\mathcal{L}(u,p)  &= {f}; \  \text{\ in} \  \Omega, \label{stoke_2} \\ \quad \nabla \cdot{u} &= g   \  \  \text{\ in} \  \Omega.  \label{stoke_3}
\end{align}
We assume that if the boundary conditions of system (\ref{stoke_2})--(\ref{stoke_3}) are homogeneous Dirichlet boundary conditions, then it is well-posed. More precisely, we make the following assumption:

 {\bf Assumption A.} For all $f \in V'_0 $ and $g \in L_0$ we assume 
that system (\ref{stoke_2})--(\ref{stoke_3}) admits a unique weak solution $(u, p) \in V_0 \times L_0 $ and that there exists a constant $C_{S} > 0$ depending only on $U, \nu$ and
$\Omega$ such that
\begin{align}
	\norm{u}_V+\norm{p}_L \leq C_S(\norm{f}_{V^{'}_{0}}+\norm{g}_L).
\end{align}
Furthermore, if $\norm{\nabla U}_{[L^{\infty}(\Omega)]^{d \times d}}$ is small enough, then the Lax–Milgram lemma implies that $ \text{Assumption A}$ holds. The assumption of smallness on $\nabla U$ is a sufficient condition, there are reasons to believe that $ \text{Assumption A}$ holds in more general cases.

In the homogeneous case (which corresponds to $f = 0$ in (\ref{stoke})--(\ref{stoke_1}) or to $f = 0$ and $g = 0$ in (\ref{stoke_2})--(\ref{stoke_3})), a
solution $(u, p)$ satisfies a three-balls inequality which only involves the $L^2$ norm of the velocity.
This three-balls inequality result is stated in \cite{LUW2010} (with their notations, $A$ corresponds to $U$ and
$B$ to $\nabla U$). 



\begin{theorem} {\em(Conditional stability for the linearized Navier–Stokes problem).} \label{cor_stab} Let $f \in V'_0$, $\omega_M \subset \Omega$ and $g \in L $ be 
	given. For all $B \subset\subset \Omega, $ there exist $C > 0$ and $0 < \tau < 1$ such that
\begin{align} \label{cond_stab}
	|u|_B \leq C(\norm{f}_{V^{'}_{0}}+\norm{g}_L+\norm{u}_L)^{1-\tau}(\norm{f}_{V^{'}_{0}}+\norm{g}_L+|{u}|_{\omega_M})^{\tau},
\end{align}	
for all $(u, p) \in [H^1(\Omega)]^d \times H^1(\Omega) $ solution of (\ref{stoke_2})--(\ref{stoke_3}).
\end{theorem}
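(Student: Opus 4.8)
The plan is to subtract from $(u,p)$ a correction that carries the data $f$ and $g$, thereby reducing to the homogeneous system $\mathcal{L}(\cdot,\cdot)=0$, $\nabla\cdot(\cdot)=0$, then apply the three‑balls inequality recalled above (from \cite{LUW2010}), propagate it by chaining, and finally carry the norms of the correction back into the estimate. Throughout, write $D:=\norm{f}_{V^{'}_{0}}+\norm{g}_L$ for the data norm.

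First I would lift the divergence constraint. Given $g\in L$, let $\phi\in H^1_0(\Omega)$ solve $-\Delta\phi=g$ and set $w:=-\nabla\phi$; then $w\in V$, $\nabla\cdot w=g$ and $\norm{w}_V\le C\norm{g}_L$ by elliptic regularity (no boundary condition is imposed on $w$, so no compatibility condition on $g$ is needed). Put $\tilde f:=f-\mathcal{L}(w,0)$; from the definition of $\mathcal{L}$ and $U\in[W^{1,\infty}(\Omega)]^d$ one gets $\tilde f\in V_0'$ with $\norm{\tilde f}_{V^{'}_{0}}\le\norm{f}_{V^{'}_{0}}+C\norm{w}_V\le \norm{f}_{V^{'}_{0}}+C\norm{g}_L$. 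By Assumption A (with right‑hand sides $\tilde f$ and $0$) there is a unique $(u_1,p_1)\in V_0\times L_0$ with $\mathcal{L}(u_1,p_1)=\tilde f$, $\nabla\cdot u_1=0$ and $\norm{u_1}_V+\norm{p_1}_L\le C_S\norm{\tilde f}_{V^{'}_{0}}\le CD$. Then $\hat u:=u-w-u_1\in[H^1(\Omega)]^d$ and $\hat p:=p-p_1\in L$ satisfy, by joint linearity of $\mathcal{L}$, the homogeneous equations $\mathcal{L}(\hat u,\hat p)=0$ and $\nabla\cdot\hat u=0$.

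Next I would apply the three‑balls inequality for the homogeneous linearized Navier--Stokes system to $\hat u$: after dividing by $\nu$ it fits the framework of \cite{LUW2010} with bounded lower‑order coefficients $U/\nu$ and $\nabla U/\nu$, so for concentric balls $B_{r_1}\subset B_{r_2}\subset B_{r_3}\subset\subset\Omega$ one has $|\hat u|_{B_{r_2}}\le C\,|\hat u|_{B_{r_1}}^{\theta}\,|\hat u|_{B_{r_3}}^{1-\theta}$ with $\theta\in(0,1)$ depending only on the radii. Choosing a ball $B_0\subset\omega_M$ (possible since $\omega_M$ has nonempty interior), covering the compact set $\overline{B}$ by finitely many balls, and chaining the inequality along paths in the connected open set $\Omega$ joining $B_0$ to each of these balls — keeping every larger ball contained in $\Omega$ — produces, after bounding the $L^2$‑masses on the larger balls by $\norm{\hat u}_L$ and summing, an estimate
\[
|\hat u|_B \;\le\; C\,\norm{\hat u}_L^{\,1-\tau}\,|\hat u|_{\omega_M}^{\,\tau}
\]
for some $\tau\in(0,1)$ and $C>0$ depending on $B,\omega_M,U,\nu,\Omega$.

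Finally I would undo the reduction. With $z:=w+u_1$ one has $\norm{z}_L\le CD$, hence $|u|_B\le|\hat u|_B+\norm{z}_L$, $\;\norm{\hat u}_L\le\norm{u}_L+\norm{z}_L\le C(D+\norm{u}_L)$ and $|\hat u|_{\omega_M}\le|u|_{\omega_M}+\norm{z}_L\le C(D+|u|_{\omega_M})$. Substituting into the displayed inequality and noting that $D$ is dominated by each of the two base quantities $D+\norm{u}_L$ and $D+|u|_{\omega_M}$, so that $D=D^{1-\tau}D^{\tau}\le(D+\norm{u}_L)^{1-\tau}(D+|u|_{\omega_M})^{\tau}$, the stray additive terms are absorbed into the product and (\ref{cond_stab}) follows after renaming $C$. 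I expect the only genuinely delicate point to be the chaining step and the bookkeeping of how $\tau$ and $C$ degenerate with the geometry of $B$ and $\omega_M$ (for instance as $B$ approaches $\partial\Omega$); the analytic core — the Carleman‑type estimate underlying the three‑balls inequality — is borrowed from \cite{LUW2010}, while the reduction to the homogeneous case and the final absorption are routine.
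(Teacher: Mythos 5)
The paper does not prove this theorem itself --- it simply defers to \cite[Appendix A]{Boulakia:2020:Burman} --- and your outline reconstructs essentially that argument: homogenize the data by a divergence lift plus the well-posed auxiliary problem of Assumption A, apply and chain the three-balls inequality of \cite{LUW2010} for the homogeneous system, then reabsorb the correction using $D\le(D+\norm{u}_L)^{1-\tau}(D+|u|_{\omega_M})^{\tau}$. The one point to tidy is the divergence lift: on a general (possibly non-convex) polygonal domain, $-\Delta\phi=g$ with $\phi\in H^1_0(\Omega)$ need not give $\phi\in H^2(\Omega)$, so $w=-\nabla\phi$ may fail to be in $[H^1(\Omega)]^d$ globally; however, $\norm{w}_L\le C\norm{g}_L$, $\mathcal{L}(w,0)\in V_0'$ with the right bound (integrate the convective term by parts and use $\Delta w=-\nabla g$), and $w\in H^2_{\mathrm{loc}}$ by interior regularity, which is all the three-balls/chaining step and the final $L^2$ bookkeeping actually require (alternatively, use a Bogovskii-type lift after splitting off the mean of $g$).
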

{\bf Proof.}  For the proof we refer the reader to \cite[Appendix A]{Boulakia:2020:Burman}.

Theorem \ref{cor_stab} provides a conditional stability result for ill-posed problems \cite{ALE2009} in the sense that, for this estimate to be helpful, it must be accompanied by an a priori bound on the solution on the global domain (due to the presence of $\norm{u}_L$ on the right-hand side). Specifically, Theorem \ref{cor_stab}  implies that a solution $(u, p)$ in $[H^1(\Omega)]^d \times H^1(\Omega)$ of problem (\ref{omega}) and (\ref{weak_form}), must be unique. For the pressure uniqueness holds up to a constant.
 Moreover, in inequality (\ref{cond_stab}), the exponent $\tau$ depends on the dimension $d$, the size of the
measure domain $\omega_M$ and the distance between the target domain $B$ and the boundary of the computational domain $\Omega$.


Moreover, let $f \in [L^2(\Omega)]^d$ and we introduce the operator $A$ defined on $(V \times L_0) \times (V_0 \times L)$ by
\begin{align}
A((u,p),(v,r)):=a(u,v)-b(p,v)+b(r,u)
\end{align}
where $a$ and $b$ are respectively defined by (\ref{bi_a}) and (\ref{bi_b}). Thus, we look for $(u, p) \in V \times L_0 $ such
that
\begin{align}\label{cont_week}
A((u,p),(v,r))=l(v)	\ \ \forall (v,r) \in V_0 \times L
\end{align}
and (\ref{omega}) holds.
\section{Stabilized finite element approximation} \label{fem_method}
In this section, we first introduce a discretization of problem (\ref{cont_week}) using a standard finite element
method. Then, the discrete inverse problem is reformulated as a constrained minimization problem in the discrete space where the regularization of the cost functional is achieved through
stabilization terms. Finally, the estimation of the error between the exact continuous solution and the discrete solution of our minimization problem is stated in Theorem \ref{th_01} which
corresponds to our main theoretical result.

Let $\{\mathcal{T}_h\}_h$ be a family of affine, simplicial meshes of $\Omega$. For simplicity, the family $\{\mathcal{T}_h\}_h$ is supposed to be quasi-uniform. Mesh faces
are collected in the set $\mathcal{F}_h$ which is split into the set of interior faces, $\mathcal{F}^{int}_h$, and of boundary
faces, $\mathcal{F}^{ext}_h$ . For a smooth enough function $v$ that is possibly double-valued at $F \in \mathcal{F}^{int}_h$ with $F=\partial{T}^{-} \cap \partial{T}^{+}$, we define its jump at $F$ as $\sjump{v}=:v_{T^{-}}-v_{T^{+}}$, and we fix the unit normal vector to $F$, denoted by $\nu_F$, as pointing from $T^-$ to $T^+$. The arbitrariness in the sign of  $\sjump{v}$ is irrelevant in what follows.

 We next define a piecewise polynomial space as
\begin{align*}
\mathbb{P}_{k}(\cT_h):=\left\{v\in {L}^{2}(\Omega): v|_T\in \mathbb{P}_k(T)\quad \forall T\in \cT_h\right\},
\end{align*}
where $\mathbb{P}_k(T)$, $k\ge 0$, is the space of polynomials of degree at most $k$ over the element $T$. Further, define a conforming finite element space as 
\begin{align*}
{P}^{c}_{k}(\cT_h) := \left\{v \in {H}^{1}(\Omega) \ : \ v|_T \in \mathbb{P}_k(T) ~~ \forall ~ T \in \cT_{h}   \right\}.
\end{align*}
Let $ {V}^k_{h}:=[{P}^{c}_{k}(\cT_h)]^{d}$, $ {W}_{h}:=V_0 \cap {V}_{h}^{k_1}$, $Q^0_{h}:={L}_{0}^{2}(\Omega)\cap {P}^{c}_{k_2}(\cT_{h})$ and $Q_{h}:= {P}^{c}_{k_3}(\cT_{h})$. For the analysis below the polynomial degrees of the above spaces may be chosen as $k \ge 1$, $k_1 \ge 1$, $k_2 \in \{ \mbox{max}\{1,k-1\}, \ k\}$ and $k_3 \ge 1$ and the convergence order will be given in terms of $k$. To make the notation more compact we introduce the composite spaces $\cV_h :=
V^k_h \times Q_{h}^0$
 and $\cW_h := W_h \times Q_h.$
We may then write the finite element approximation of (\ref{cont_week}): Find $({u}_h,p_h) \in \mathcal{V}_h $ such that
\begin{align}\label{dis_weak}
    A(({u}_h,p_h),({v}_h,q_h))=l({v}_h),
\end{align}
for all $({v}_h,q_h) \in \mathcal{W}_h $.

 Let us introduce the measurement bilinear form to take into account the measurements on $\omega_M$ given by (\ref{omega}).
\begin{align}
	m(u,u):=|{u}|_{\omega_M}^2=\gamma_M \xi^{-1} \int_{\omega_M} u^2,
\end{align}
where $\xi=\max(\nu, \norm{ U}_{[L^{\infty}(\Omega)]^{d \times d}}h)$ and $\gamma_M > 0$ will correspond to a free parameter representing the relative confidence in the
measurements. The objective is then to minimize the functional
\begin{align}
	\frac{1}{2}m(u_M-u_h, u_M-u_h)
\end{align}
under the constraint that $(u_h, p_h)$ satisfies (\ref{dis_weak}). 

We now introduce the following discrete Lagrangian for $(({u}_h,p_h),({z}_h,y_h)) \in \cV_h \times \cW_h$,
\begin{align}\label{eq4}
\L_h(({u}_h,p_h),({z}_h,y_h)):= \frac{1}{2}m({u}_h-{{u}}_M,{u}_h-{{u}}_M)+A(({u}_h,p_h),({z}_h,y_h))-l({z}_h).
\end{align}
If we differentiate with respect to $({u}_h, p_h)$  and $({z}_h,y_h)$, we get the following optimality system: Find  $ ({u}_h, p_h) \in \cV_h $
and $({z}_h,y_h) \in \cW_h$ such that
\begin{align}
A(({u}_h,p_h),({w}_h,x_h))&=l({w}_h),  \\A(({v}_h,q_h),({z}_h,y_h))+m({u}_h,{v}_h)
&=m({{u}}_{M},{v}_h),
\end{align}
for all $ ({v}_h, q_h) \in \cV_h $ and $({w}_h,x_h) \in \cW_h$. However, the discrete Lagrangian associated to
this problem leads to an optimality system which is ill-posed. 
To regularize it, we introduce stabilization operators that will convexify the problem with respect to the direct variables $u_h,
p_h$ and the adjoint variables $z_h, y_h$. We introduce $S_{{u}}: {V}_h \times {V}_h \rightarrow \mathbb{R}$, $S^{\ast}_{{u}}: {W}_h \times {W}_h \rightarrow \mathbb{R}$, $S_{p}: {Q}^0_h \times {Q}^0_h \rightarrow \mathbb{R}$ and $S^{\ast}_{p}: {Q}_h \times {Q}_h \rightarrow \mathbb{R}$. The choice of stabilization terms will be discussed later.
For compactness, we introduce the primal and dual stabilizers: for all $({u}_h, p_h),({v}_h, q_h) \in \cV_h$
\begin{align}
S_h(({u}_h, p_h),({v}_h, q_h))&=S_g(({u}_h, p_h),({v}_h, q_h))+\tilde{S}_h(({u}_h, p_h),({v}_h, q_h)),\nonumber \\
	S_g(({u}_h, p_h),({v}_h, q_h))&= {\gamma_{GLS} \sum_{T \in \mathcal{T}_h} \int_{T}h^2_T \xi^{-1}_{T} \mathcal{L}(u_h,p_h)\mathcal{L}(v_h,q_h) \dx},
    \label{eq:Sg}
 \\ \tilde{S}_h(({u}_h, p_h),({v}_h, q_h))&= \alpha(h^{2k} \nabla{{u}_h},\nabla{{v}_h})+\gamma_u \sum_{F \in \mathcal{F}^{int}_h} \int_{F}h_F\xi_{F} \sjump{\nabla{{u}_h  \cdot {n}}}\sjump{\nabla{{v}_h \cdot {n}}} \ds \nonumber \\&+\gamma_{div} \int_{\Omega}\xi_{T}(\nabla \cdot {u}_h)(\nabla \cdot {v}_h) \dx,
  \label{eq:Sh}
	\end{align}
 where $\xi_{T}=\max(\nu, \norm{ U}_{[L^{\infty}(\Omega)]^{d \times d}}h_T)$, $\xi_{F}=\max(\nu, \norm{ U}_{[L^{\infty}(\Omega)]^{d \times d}}h_F)$ and $\gamma_{GLS}$, $\alpha$, $\gamma_u$, and $\gamma_{div}$ 
are positive user-defined parameters.
And for all $({z}_h, y_h),({w}_h, x_h) \in \cW_h$
\begin{align}
	S^{\ast}_h(({z}_h, y_h),({w}_h, x_h))&=S^{\ast}_{{u}}({z}_h, {w}_h)+S^{\ast}_p( y_h, x_h),\nonumber \\S^{\ast}_{{u}}({z}_h, {w}_h)&=\gamma^{\ast}_u  \int_{\Omega} {\nabla{{z}_h}}:{\nabla{{w}_h}} \dx, \\ S^{\ast}_p( y_h, x_h) &=\gamma^{\ast}_{p} \int_{\Omega}{y}_h{x}_h \dx, \label{eq:press_stab_l2}
\end{align}
where $\gamma_u^*$ and $\gamma^*_p$ are positive user-defined parameters. Let us make some comments
on these stabilization terms. The stabilization of the direct velocity acts on fluctuations of the
discrete solution through a penalty on the jump of the solution gradient over element faces and
has no equivalent on the continuous level. The form $S_{g}(\cdot, \cdot)$ is a Galerkin least squares stabilization.  Let us mention that there is  some freedom in the choice of dual stabilization, e.g. set $S^{\ast}_p( y_h, x_h) =\gamma^{\ast}_{p} \int_{\Omega} \nabla {y}_h \nabla{x}_h \dx$. We will only detail the analysis for the first choice \eqref{eq:press_stab_l2} below.
We refer the reader to \cite{EB2013,EB2016} for a more general discussion of the possible stabilization operators. 

We may then write the discrete Lagrangian $\L_h:\cV_h \times \cW_h \rightarrow \mathbb{R}$, for all $ ({u}_h, p_h) \in \cV_h $ and $({z}_h,y_h) \in \cW_h$.
\begin{align}
\L_h(({u}_h,p_h),({z}_h,y_h)):= \frac{1}{2}m({u}_h-{{u}}_M,{u}_h-{{u}}_M)+A(({u}_h,p_h),({z}_h,y_h))-l(z_h) \nonumber \\+\frac{1}{2} S_g(({u}_h-u, p_h-p),({v}_h, q_h))+\tilde{S}_h(({u}_h, p_h),({v}_h, q_h))-\frac{1}{2} S^{\ast}_h(({z}_h, y_h),({z}_h, y_h))
\end{align}
If we differentiate with respect to $({u}_h, p_h)$  and $({z}_h,y_h)$, we get the following optimality system: Find  $ ({u}_h, p_h) \in \cV_h $
and $({z}_h,y_h) \in \cW_h$ such that
\begin{align} 
A(({u}_h,p_h),({w}_h,x_h))-S^{\ast}_h(({z}_h, y_h),({w}_h, x_h))&=l({w}_h),  \label{eq_03}\\A(({v}_h,q_h),({z}_h,y_h))+S_h(({u}_h, p_h),({v}_h, q_h))+m({u}_h,{v}_h)
&=m({{u}}_{M},{v}_h) \nonumber\\ &+\gamma_{GLS}\sum_{T \in \cT_h}\int_{T}{f}h_T^2\xi^{-1}_{T}\mathcal{L}(v_h,q_h) \dx,\label{eq_04}
\end{align}
for all $ ({v}_h, q_h) \in \cV_h $ and $({w}_h,x_h) \in \cW_h$.


\section{Stability and Error Analysis} \label{sec4}
 To prove the stability of our formulations, we need the following result.
\begin{lemma} 
  There exists $C_p$ such that for all $v_h \in V_h$ there holds
 \begin{align}\label{poincare_ineq}
     \norm{v_h}_{H^{1}(\Omega)} \leq C_p(\norm{v_h}_{\omega_M}+\norm{\nabla{v_h}}_{L}).
\end{align}
\end{lemma}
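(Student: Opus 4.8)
The plan is to prove the stronger, mesh-independent estimate
\[
\norm{v}_L \le C\left(\norm{v}_{\omega_M}+\norm{\nabla v}_L\right)\qquad\text{for all }v\in[H^1(\Omega)]^d,
\]
and then to obtain \eqref{poincare_ineq} for $v_h\in V_h\subset[H^1(\Omega)]^d$ simply by adding $\norm{\nabla v_h}_L$ to both sides, using $\norm{v_h}_{H^1(\Omega)}\le \norm{v_h}_L+\norm{\nabla v_h}_L$, and absorbing the resulting constants into a single $C_p$. Carrying the argument out at the continuous level also makes clear that $C_p$ depends only on $\Omega$ and $\omega_M$, and not on $h$, which is the property actually used in the stability analysis.

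First I would establish the continuous estimate by the classical compactness (Peetre–Tartar) argument. Suppose it fails; then there is a sequence $(v_n)\subset[H^1(\Omega)]^d$ with $\norm{v_n}_L=1$ and $\norm{v_n}_{\omega_M}+\norm{\nabla v_n}_L\to 0$. The sequence is then bounded in $[H^1(\Omega)]^d$, so by the Rellich–Kondrachov compact embedding a subsequence (not relabelled) converges strongly in $[L^2(\Omega)]^d$ to some $v$ and weakly in $[H^1(\Omega)]^d$. Weak lower semicontinuity of $w\mapsto\norm{\nabla w}_L$ gives $\norm{\nabla v}_L\le\liminf_n\norm{\nabla v_n}_L=0$, hence $\nabla v=0$; since $\Omega$ is connected, $v$ is a constant vector. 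Strong $L^2(\Omega)$-convergence (hence $L^2(\omega_M)$-convergence) yields $\norm{v}_{\omega_M}=\lim_n\norm{v_n}_{\omega_M}=0$, and because $\omega_M$ has nonempty interior, hence positive Lebesgue measure, this forces the constant $v$ to vanish. But then $\norm{v}_L=\lim_n\norm{v_n}_L=1$, a contradiction.

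I do not expect any real obstacle here: the only hypotheses invoked are that $\Omega$ is connected (so that a gradient-free function is constant) and that $\omega_M$ has positive measure (guaranteed by the standing assumption that it has nonempty interior), both available in the present setting. If an explicit constant were preferred, one could instead argue constructively: set $\bar v:=|\omega_M|^{-1}\int_{\omega_M}v$, bound $\norm{v-\bar v}_L$ by $\norm{\nabla v}_L$ using the Poincaré–Wirtinger inequality on $\Omega$ together with $|\langle v\rangle_\Omega-\bar v|\le|\omega_M|^{-1/2}\norm{v-\langle v\rangle_\Omega}_L$, then control $|\bar v|\,|\Omega|^{1/2}$ by $\norm{v}_{\omega_M}+\norm{v-\bar v}_L$ and combine. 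The compactness proof, however, is the shortest route and suffices for what follows.
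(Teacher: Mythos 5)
Your proof is correct and is essentially the paper's argument in expanded form: the paper simply cites the generalized Poincar\'e inequality of Ern--Guermond (their Lemma~B.63, applied with the functional $f(v)=\int_{\omega_M}v$), whose proof is precisely the Peetre--Tartar compactness argument you carry out. Inlining that argument changes nothing of substance, and your observation that the estimate holds for all of $[H^1(\Omega)]^d$ with a mesh-independent constant matches what the paper uses.
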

\begin{proof} The following Poincar\'e inequality is well known \cite[lemma B.63]{Ern:2004:FEM}. If $f:H^{1}(\Omega) \rightarrow \mathbb{R}$ is a linear functional that is non-zero for constant functions then
\begin{align*}
 \norm{v}_{H^{1}(\Omega)}  \leq C_p (|f(v)|+\norm{\nabla{v}}_{L}), \quad \forall v \in H^{1}(\Omega).
\end{align*}
For instance, we may take
\begin{align*}
    f(v) = \int_{\omega_M} v \dx \leq C |v|_{\omega_M}.
\end{align*}
As an immediate consequence we have the bound (\ref{poincare_ineq}).
\end{proof}
Let us prove that the discrete problem is well-posed. We can write the discrete formulation in a more compact form. Let $({u}_h, p_h)=U_h$, $({z}_h, y_h)=Z_h$, $({v}_h, q_h)=X_h$ and $({w}_h, x_h)=Y_h$.
\begin{align}\label{bilinear}
\mathcal{G}((U_h,Z_h),(X_h,Y_h))=A_h(U_h,Y_h)&-S^{\ast}_h(Z_h,Y_h) +A_h(X_h,Z_h)+S_h(U_h,X_h)+\gamma_M({u}_h,{v}_h)_{\omega_M}.
\end{align}
We define the norm on $([H^{2}(\Omega)]^d+V_h)\times (H^{1}(\Omega)+Q_h)$
\begin{align} \label{eq_711}
    \tnorm {(U_h,Z_h)}^{2} &:= S_h(U_h,U_h)+\gamma_M|{u}_h|_{\omega_M}^{2}+S^{\ast}_h(Z_h,Z_h).
\end{align}
$\tnorm {(U_h,Z_h)}$ defines a norm, since $\gamma_M>0$, $\alpha >0$  and thanks to the Poincar\'{e} inequality (\ref{poincare_ineq}). 
The following result demonstrates the stability of the system (\ref{eq_03})--(\ref{eq_04}). 
\begin{theorem} \label{inf_sup} {The discrete bilinear form (\ref{bilinear}) satisfies the following inf-sup condition for some positive constant $\gamma$, independent of $h$:  }  
\begin{equation*}
 \inf_{(U_h,Z_h) \in \mathcal{V}_h \times\mathcal{W}_h  } \sup_{(X_h,Y_h) \in \mathcal{V}_h \times\mathcal{W}_h  }\frac{\mathcal{G}((U_h,Z_h),(X_h,Y_h))}{\tnorm{(U_h,Z_h)}  \tnorm{(X_h,Y_h) }} \geq \gamma.
\end{equation*}
\end{theorem}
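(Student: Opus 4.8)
The plan is to prove the inf-sup bound by a single explicit choice of test function that exploits the antisymmetric coupling between the primal block $U_h$ and the adjoint block $Z_h$ in $\mathcal{G}$; no duality argument or mesh-dependent inverse estimate is needed, and in fact one obtains $\gamma=1$. Before that, I would recall (as already asserted after \eqref{eq_711}) that $\tnorm{\cdot}$ is indeed a norm on $\mathcal{V}_h\times\mathcal{W}_h$, since this is what makes the quotient in the statement well defined: the term $\gamma_M|u_h|^2_{\omega_M}$ together with the $\alpha h^{2k}\norm{\nabla u_h}_L^2$ contribution hidden in $S_h(U_h,U_h)$ controls $\norm{u_h}_V$ via the Poincar\'e inequality \eqref{poincare_ineq}; the Galerkin--least-squares term $S_g$ then controls $\norm{\nabla p_h}_L$ because $\mathcal{L}(0,p_h)=\nabla p_h$, which combined with the zero-mean constraint in $Q^0_h$ pins down $p_h$; and $S^\ast_h(Z_h,Z_h)=\gamma^\ast_u\norm{\nabla z_h}_L^2+\gamma^\ast_p\norm{y_h}_L^2$ controls $(z_h,y_h)$ since $z_h\in W_h\subset V_0$ vanishes on $\partial\Omega$.

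The main step: given $(U_h,Z_h)\in\mathcal{V}_h\times\mathcal{W}_h$, test \eqref{bilinear} with $(X_h,Y_h):=(U_h,-Z_h)$, which is an admissible element of $\mathcal{V}_h\times\mathcal{W}_h$. By bilinearity of $A_h$ the two coupling contributions cancel, $A_h(U_h,Y_h)+A_h(X_h,Z_h)=-A_h(U_h,Z_h)+A_h(U_h,Z_h)=0$, while $-S^\ast_h(Z_h,Y_h)=S^\ast_h(Z_h,Z_h)$, $S_h(U_h,X_h)=S_h(U_h,U_h)$ and $\gamma_M(u_h,v_h)_{\omega_M}=\gamma_M|u_h|^2_{\omega_M}$. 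Hence
\[
\mathcal{G}\big((U_h,Z_h),(U_h,-Z_h)\big)=S_h(U_h,U_h)+\gamma_M|u_h|^2_{\omega_M}+S^\ast_h(Z_h,Z_h)=\tnorm{(U_h,Z_h)}^2 .
\]

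To finish, observe that each of the three quadratic quantities defining $\tnorm{\cdot}^2$ in \eqref{eq_711} is invariant under $Z_h\mapsto -Z_h$, so $\tnorm{(X_h,Y_h)}=\tnorm{(U_h,-Z_h)}=\tnorm{(U_h,Z_h)}$; therefore
\[
\sup_{(X_h,Y_h)\in\mathcal{V}_h\times\mathcal{W}_h}\frac{\mathcal{G}((U_h,Z_h),(X_h,Y_h))}{\tnorm{(U_h,Z_h)}\,\tnorm{(X_h,Y_h)}}\ \geq\ \frac{\tnorm{(U_h,Z_h)}^2}{\tnorm{(U_h,Z_h)}^2}=1 ,
\]
and taking the infimum over $(U_h,Z_h)$ yields the claim with $\gamma=1$, independent of $h$. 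The only genuinely delicate point in this argument is the non-degeneracy of $\tnorm{\cdot}$ recalled in the first paragraph; the substantive difficulty of the paper lies elsewhere, namely in the consistency of the stabilization operators and in the error estimate of Theorem \ref{th_01}, where the conditional stability estimate of Theorem \ref{cor_stab} and interpolation bounds in $k$ must be combined.
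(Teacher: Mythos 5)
Your proposal is correct and is essentially identical to the paper's own proof: both test $\mathcal{G}$ with $(X_h,Y_h)=(U_h,-Z_h)$, observe the cancellation of the $A_h$ coupling terms so that $\mathcal{G}((U_h,Z_h),(U_h,-Z_h))=\tnorm{(U_h,Z_h)}^2$, and conclude using $\tnorm{(U_h,-Z_h)}\le\tnorm{(U_h,Z_h)}$. Your additional paragraph justifying that $\tnorm{\cdot}$ is genuinely a norm is a welcome elaboration of a point the paper only asserts, but it does not change the argument.
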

{\bf Proof.} In order to prove the stability result, it is enough to choose some $(X_h,Y_h) \in \mathcal{V}_h \times\mathcal{W}_h$ for any arbitrary $(U_h,Z_h) \in \mathcal{V}_h \times\mathcal{W}_h,$ such that
\begin{equation*} \label{inf_sup_d} 
 \frac{\mathcal{G}((U_h,Z_h),(X_h,Y_h))}{\tnorm {(X_h,Y_h)}} \geq \gamma \tnorm{(U_h,Z_h)} > 0.
\end{equation*}
First, consider the bilinear form in (\ref{bilinear}) with  $(X_h, Y_h) = (U_h, -Z_h)$: 
\begin{align*}\label{eq11}
    \mathcal{G}((U_h,Z_h),(U_h,-Z_h))&=S^{\ast}_h(Z_h,Z_h)+S_h(U_h,U_h)+\gamma_M({u}_h,{u}_h)_{\omega_M} \nonumber \\ &=S^{\ast}_h(Z_h,Z_h)+S_h(U_h,U_h)+\gamma_M|{u}_h|^{2}_{\omega_M}.
\end{align*}
\begin{equation} \label{eq8}
\mathcal{G}((U_h,Z_h),(U_h,-Z_h)) \geq \tnorm{(U_h,Z_h)}^{2}.
\end{equation}
and 
\begin{align} \label{eq9}
    \tnorm{(U_h,-Z_h)} \leq \tnorm{(U_h,Z_h)}.
\end{align}
Finally, by dividing (\ref{eq8}) by (\ref{eq9}), we get the result.

According to the Babu\v{s}ka--Ne\v{c}as--Brezzi theorem (see \cite{Ern:2004:FEM}), the square linear system defined by (\ref{eq_03})--(\ref{eq_04}) admits a unique solution for all $h > 0$.

\subsection{Error Analysis}
Now recall the following technical results of finite element analysis.
\begin{lemma}{\rm Trace inequality} \cite{Ern:2012:DGBook}: {Suppose $F$ denotes an edge of $T \in \cT_{h}$}. For  $v_h\in \mathbb{P}_{k}(\cT_{h})$, there holds
	\begin{align} 
	\|v_h\|_{{L}^2(F)} &\leq C h_T^{-1/2} \|v_h\|_{{L}^2(T)} . \label{trace_ineq1}
	\end{align}
\end{lemma}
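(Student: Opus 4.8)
The plan is the classical scaling argument. Fix a reference simplex $\hat T$ and, for $T\in\cT_h$, let $F_T:\hat T\to T$, $F_T(\hat x)=B_T\hat x+b_T$, be the affine diffeomorphism provided by the simplicial structure. Since $\{\cT_h\}_h$ is quasi-uniform it is in particular shape-regular, so $\|B_T\|\le C h_T$, $\|B_T^{-1}\|\le C h_T^{-1}$ and $|\det B_T|\ge c\,h_T^{d}$, with constants depending only on the shape-regularity of the family. Let $\hat F$ denote the (fixed, up to the finitely many choices on $\hat T$) face of $\hat T$ with $F_T(\hat F)=F$, and set $\hat v_h:=v_h\circ F_T$, which belongs to $\mathbb{P}_k(\hat T)$ because $F_T$ is affine.

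First I would record the behaviour of the two measures under $F_T$. By the change of variables formula for volume integrals, $\|v_h\|_{L^2(T)}^2=\int_{\hat T}|\hat v_h|^2\,|\det B_T|\,d\hat x\ge c\,h_T^{d}\,\|\hat v_h\|_{L^2(\hat T)}^2$. For the face integral, the surface measure on $F$ pulls back to a bounded multiple of the surface measure on $\hat F$: the Jacobian of $F_T$ restricted to the hyperplane containing $\hat F$ is controlled by $\|B_T\|^{d-1}\le C h_T^{d-1}$, so $\|v_h\|_{L^2(F)}^2\le C\,h_T^{d-1}\,\|\hat v_h\|_{L^2(\hat F)}^2$.

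Next comes the only non-scaling ingredient, which lives entirely on the fixed reference element: the map $\hat w\mapsto \|\hat w\|_{L^2(\hat F)}$ is a well-defined continuous seminorm on the finite-dimensional space $\mathbb{P}_k(\hat T)$ (polynomials are continuous, hence admit a trace on $\hat F$), while $\|\cdot\|_{L^2(\hat T)}$ is a genuine norm on $\mathbb{P}_k(\hat T)$. By equivalence of all norms and seminorms on a finite-dimensional space there is a constant $\hat C=\hat C(k,d,\hat T)$ with $\|\hat w\|_{L^2(\hat F)}\le \hat C\,\|\hat w\|_{L^2(\hat T)}$ for all $\hat w\in\mathbb{P}_k(\hat T)$; taking the maximum over the finitely many faces of $\hat T$ makes $\hat C$ independent of which face is involved. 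Chaining the three displays,
\[
\|v_h\|_{L^2(F)}^2\le C\,h_T^{d-1}\,\|\hat v_h\|_{L^2(\hat F)}^2\le C\hat C^2\,h_T^{d-1}\,\|\hat v_h\|_{L^2(\hat T)}^2\le \frac{C\hat C^2}{c}\,h_T^{d-1}h_T^{-d}\,\|v_h\|_{L^2(T)}^2,
\]
and taking square roots gives \eqref{trace_ineq1} with $C$ depending only on $k$, $d$ and the shape-regularity constant of $\{\cT_h\}_h$.

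I expect the only point requiring a little care to be the pull-back of the surface measure (as opposed to the volume measure), since it involves the Jacobian of $F_T$ restricted to a $(d-1)$-dimensional affine subspace rather than $|\det B_T|$; everything else is bookkeeping of powers of $h_T$ together with the finite-dimensional norm-equivalence on $\hat T$.
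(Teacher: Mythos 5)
Your proof is correct: the scaling to a reference simplex, the separate treatment of the volume and surface Jacobians, and the finite-dimensional norm-equivalence on $\mathbb{P}_k(\hat T)$ together give exactly the exponent $h_T^{(d-1)/2 - d/2} = h_T^{-1/2}$ claimed in \eqref{trace_ineq1}. The paper itself offers no proof and simply cites \cite{Ern:2012:DGBook}, where this same standard argument is given, so your reconstruction matches the intended one.
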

\begin{lemma} {\rm Inverse inequality} \cite{Ern:2012:DGBook}: Let $v \in \mathbb{P}_{k}(\cT_{h})$, for all $k \geq 0$. Then,
\begin{align}
\norm{\nabla{v}}_{{L}^2(T)} \leq C h^{-1}_{T} \norm{v}_{{L}^2(T)}.  \label{inverse_ineq1}
\end{align}
\end{lemma}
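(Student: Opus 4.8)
The plan is to reduce \eqref{inverse_ineq1} to a statement on a single fixed reference element by an affine scaling argument, where it becomes the equivalence of two norms on the finite-dimensional space $\mathbb{P}_k$.

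First I would fix a reference simplex $\hat{T}$ and, for each $T \in \cT_h$, write the affine bijection $F_T : \hat{T} \to T$ as $F_T(\hat{x}) = B_T \hat{x} + b_T$ with $B_T \in \mathbb{R}^{d\times d}$ invertible. Given $v \in \mathbb{P}_k(T)$, the pullback $\hat{v} := v \circ F_T$ lies in $\mathbb{P}_k(\hat{T})$. Since $\hat{T}$ is fixed and $\mathbb{P}_k(\hat{T})$ is finite-dimensional, all norms on it are equivalent; in particular there is a constant $\hat{C} = \hat{C}(k,d)$ with $\|\hat{v}\|_{H^1(\hat{T})} \le \hat{C}\,\|\hat{v}\|_{L^2(\hat{T})}$ for all $\hat{v} \in \mathbb{P}_k(\hat{T})$, hence a fortiori $\|\hat{\nabla}\hat{v}\|_{L^2(\hat{T})} \le \hat{C}\,\|\hat{v}\|_{L^2(\hat{T})}$.

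Next I would transport this estimate back to $T$ through the change of variables $x = F_T(\hat{x})$. With $\mathrm{d}x = |\det B_T|\,\mathrm{d}\hat{x}$ and the chain rule $\nabla v(x) = B_T^{-\top}\hat{\nabla}\hat{v}(\hat{x})$, one obtains $\|v\|_{L^2(T)}^2 = |\det B_T|\,\|\hat{v}\|_{L^2(\hat{T})}^2$ and $\|\nabla v\|_{L^2(T)}^2 \le \|B_T^{-1}\|^2\,|\det B_T|\,\|\hat{\nabla}\hat{v}\|_{L^2(\hat{T})}^2$, so combining with the reference estimate gives $\|\nabla v\|_{L^2(T)} \le \hat{C}\,\|B_T^{-1}\|\,\|v\|_{L^2(T)}$. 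Finally I would invoke the quasi-uniformity (hence shape-regularity) of $\{\cT_h\}_h$ assumed in Section \ref{fem_method}: the standard geometric bound $\|B_T^{-1}\| \le \mathrm{diam}(\hat{T})/\rho_T$ together with $\rho_T \ge c\,h_T$ for the inradius $\rho_T$ of $T$ yields $\|B_T^{-1}\| \le C h_T^{-1}$ with $C$ independent of $h$ and $T$. This establishes \eqref{inverse_ineq1} with $C$ depending only on $k$, $d$ and the shape-regularity constant.

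The only non-elementary ingredient is the equivalence of norms on $\mathbb{P}_k(\hat{T})$; the rest is bookkeeping for the affine map. The point to be careful about is that $\|\hat{\nabla}\,\cdot\,\|_{L^2(\hat{T})}$ is merely a seminorm on $\mathbb{P}_k(\hat{T})$, so the reference inequality should be phrased as bounding a genuine norm (the full $H^1(\hat{T})$-norm, or any other norm) by the $L^2(\hat{T})$-norm; once set up this way the argument is immediate, and the exponent $-1$ of $h_T$ is precisely what shape-regularity of the mesh delivers.
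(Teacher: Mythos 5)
Your proof is correct and is precisely the standard affine scaling argument used in the cited reference \cite{Ern:2012:DGBook}; the paper itself gives no proof, only the citation. Your handling of the seminorm subtlety on the reference element and the appeal to shape-regularity (implied by the quasi-uniformity assumed in Section \ref{fem_method}) are both appropriate.
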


\begin{lemma} Let  $I_h:{L}^2(\Omega)\rightarrow  {P}^{c}_{k}(\cT_h)$ be the Cl$\acute{e}$ment interpolation. The following approximation estimates hold for the interpolation operator $I_h$, see \cite{Ern:2004:FEM},
	\begin{align}\label{eq_Istab}
	   \norm{ I_{h}v}_L \leq C \norm{v}_L, \forall v \in L\quad \norm{ \nabla I_{h}v}_L \leq C \norm{\nabla v}_L, \forall v \in H^1(\Omega),
    \end{align}
	\begin{align}\label{eq_30}
	\norm{ ({v-I_{h}v})}_L+h\norm{\nabla(v-I_hv)}_L &\leq Ch^{t}\norm{v}_{{H}^{t}(\Omega)},\ {\rm for~ all } \ v\in {H}^{t}(\Omega), \  1 \leq t \leq k+1, \\
 \left(\sum_{T \in \mathcal{T}_h}\norm{\Delta({v}-{I}_h{v})}^2_{{L}^2(T)} \right)^{1/2}&\leq Ch^{ t-2}\norm{{v}}_{{H}^t(\Omega)},\ {\rm for~ all } \ v\in {H}^{t}(\Omega), \ 2 \leq t \leq k+1,\label{eq_30_1}
	\end{align}
		\begin{align}\label{intglobal_trace}
	\left(\sum_{F \in \mathcal{F}^{int}_h}\norm{{v}-{I}_h{v}}^2_{{L}^2(F)} \right)^{1/2}&\leq Ch^{ t-1/2}\norm{{v}}_{{H}^t(\Omega)},\ {\rm for~ all } \ v\in {H}^{t}(\Omega), \ t \leq k+1,\\
	\left(\sum_{F \in \mathcal{F}^{int}_h}\norm{\nabla({v}-{I}_h{v})}^2_{{L}^2(F)} \right)^{1/2}&\leq Ch^{ t-3/2}\norm{{v}}_{{H}^t(\Omega)},\ {\rm for~ all } \ 1 \leq v\in {H}^{t}(\Omega), \  2 \leq t \leq k+1.	\label{intglobal_trace_1}
	\end{align}
The same bound holds for interpolation of vector-valued functions,  ${I}_{h}:[{L}^2(\Omega)]^{d}\rightarrow  V^k_h$ and for interpolation on $\cW_h$ where homogeneous boundary conditions are imposed.
\end{lemma}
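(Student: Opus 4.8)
The plan is to prove all six estimates by the classical local-to-global argument for Clément-type quasi-interpolation, for which \cite{Ern:2004:FEM} is the reference; I only indicate the structure. Recall that $I_h v = \sum_{a} (\pi_a v)(a)\,\varphi_a$, where $\varphi_a$ is the Lagrange nodal basis function at the node $a$, $\Delta_a := \mathrm{supp}\,\varphi_a$ is the associated node patch, and $\pi_a : L^2(\Delta_a) \to \mathbb{P}_k(\Delta_a)$ is a local $L^2$-projection (for the variant acting on $\cW_h$ that preserves homogeneous boundary values one uses Scott--Zhang functionals supported on a face; the argument below is unchanged). For $T \in \mathcal{T}_h$ write $\Delta_T := \bigcup\{\Delta_a : a \in \overline{T}\}$; by shape-regularity and quasi-uniformity each point of $\Omega$ lies in $\cO(1)$ patches, $\mathrm{diam}(\Delta_T) \simeq h_T \simeq h$, and $I_h$ reproduces any $p$ that is globally in $\mathbb{P}_k$ on $\Delta_T$, i.e. $I_h p|_T = p|_T$.

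First I would establish the stability \eqref{eq_Istab}. On the reference patch the $L^2$-projection is $L^2$-stable; mapping back, using $\norm{\varphi_a}_{L^\infty} \le C$ and a scaled inverse estimate \eqref{inverse_ineq1} for the nodal basis, one gets the local bounds $\norm{I_h v}_{L^2(T)} \le C\norm{v}_{L^2(\Delta_T)}$ and $\norm{\nabla I_h v}_{L^2(T)} \le C h_T^{-1}\norm{v}_{L^2(\Delta_T)}$. Squaring and summing over $T$ with the finite-overlap property gives the first estimate. For the gradient-stability one subtracts the patch mean: since $\pi_a$ reproduces constants, $\nabla I_h v|_T$ depends only on $v - \bar v_{\Delta_T}$, so a Poincaré inequality on $\Delta_T$ turns $h_T^{-1}\norm{v - \bar v_{\Delta_T}}_{L^2(\Delta_T)}$ into $C\norm{\nabla v}_{L^2(\Delta_T)}$, and summing yields $\norm{\nabla I_h v}_L \le C\norm{\nabla v}_L$.

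Next, for \eqref{eq_30}--\eqref{eq_30_1} I would fix $T$ and, for any $p \in \mathbb{P}_k(\Delta_T)$, write $v - I_h v = (v-p) - I_h(v-p)$ on $T$. Applying the local stability bounds just derived together with \eqref{inverse_ineq1} gives $\norm{v - I_h v}_{L^2(T)} \le C\norm{v-p}_{L^2(\Delta_T)}$, $\norm{\nabla(v-I_h v)}_{L^2(T)} \le C\big(\norm{\nabla(v-p)}_{L^2(\Delta_T)} + h_T^{-1}\norm{v-p}_{L^2(\Delta_T)}\big)$, and $\norm{\Delta(v-I_h v)}_{L^2(T)} \le C\big(|v-p|_{H^2(\Delta_T)} + h_T^{-2}\norm{v-p}_{L^2(\Delta_T)}\big)$. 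Choosing $p$ to be the averaged Taylor polynomial on $\Delta_T$ and invoking the Bramble--Hilbert lemma, $\inf_{p}\norm{v-p}_{L^2(\Delta_T)} \le C h_T^{t}|v|_{H^t(\Delta_T)}$ for $1 \le t \le k+1$ (and the corresponding seminorm bounds), one obtains the elementwise estimates $Ch_T^{t}$, $Ch_T^{t-1}$, $Ch_T^{t-2}$, respectively — the Laplacian bound requiring $t \ge 2$ for $D^2 v$ to be defined. Summing over $T$, using finite overlap and $h_T \simeq h$, delivers \eqref{eq_30}--\eqref{eq_30_1}. Finally, for \eqref{intglobal_trace}--\eqref{intglobal_trace_1} I would apply to each interior face $F = \partial T^-\cap\partial T^+$ the scaled trace inequality $\norm{w}_{L^2(F)}^2 \le C\big(h_{T}^{-1}\norm{w}_{L^2(T)}^2 + h_{T}\norm{\nabla w}_{L^2(T)}^2\big)$: with $w = v - I_h v$ and the element bounds above this gives $\norm{v-I_h v}_{L^2(F)}^2 \le C h^{2t-1}|v|_{H^t(\Delta_{T^\pm})}^2$ for $t \le k+1$, while with $w = \nabla(v - I_h v)$ (which needs $v \in H^2$) it gives $\norm{\nabla(v-I_h v)}_{L^2(F)}^2 \le C h^{2t-3}|v|_{H^t(\Delta_{T^\pm})}^2$ for $2 \le t \le k+1$; summing over $F$, each face lying in exactly two element patches, completes the proof.

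The whole argument is classical; the only points needing care are the finite-overlap/macro-patch bookkeeping that lets the local estimates assemble globally under quasi-uniformity, the tracking of the regularity thresholds ($t \ge 1$ for the value and gradient, $t \ge 2$ for the Laplacian and the gradient-trace term), and the use of the boundary-value-preserving Scott--Zhang variant on $\cW_h$ so that the local estimates remain valid on patches meeting $\partial\Omega$. None of these is a genuine obstacle.
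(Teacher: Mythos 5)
Your sketch is correct and is precisely the standard local-to-global Clément/Scott--Zhang argument (local projection stability, Bramble--Hilbert on macro-patches, inverse and scaled trace inequalities, finite-overlap summation) that the paper itself does not reprove but simply cites from \cite{Ern:2004:FEM}. No discrepancy to report.
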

Using the above bounds to the componentwise extension of $I_h$ to vectorial functions, we deduce the following approximation bound. 
\begin{corollary}  \label{cor_L} It holds for $(u,p) \in [H^{k+1}(\Omega)]^d \times  H^{k}(\Omega)$,
    \begin{align*}
       \left( \sum_{T \in \mathcal{T}_h}\norm{ \mathcal{L}(I_hu-u,I_hp-p)}_{L^2(T)}^2\right)^{\frac12} \leq C h^{k-1} \left(\norm{ {u}}_{[H^{k+1}(\Omega)]^d} + \norm{p}_{H^{k}(\Omega)} \right ).
    \end{align*}
\end{corollary}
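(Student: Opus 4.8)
The plan is to expand $\mathcal{L}(I_hu-u,I_hp-p)$ according to its definition $\mathcal{L}(v,q)=(U\cdot\nabla)v+(v\cdot\nabla)U-\nu\Delta v+\nabla q$ and to bound each of the four resulting terms, on each element $T$, by the Cl\'ement interpolation estimates of the previous lemma applied componentwise to the vectorial quantities. Set $e_u:=I_hu-u$ and $e_p:=I_hp-p$. Using $U\in[W^{1,\infty}(\Omega)]^d$, on every $T\in\mathcal{T}_h$ the triangle inequality gives
\begin{align*}
\norm{\mathcal{L}(e_u,e_p)}_{L^2(T)}\le \norm{U}_{[L^{\infty}(\Omega)]^d}\norm{\nabla e_u}_{L^2(T)}+\norm{\nabla U}_{[L^{\infty}(\Omega)]^{d\times d}}\norm{e_u}_{L^2(T)}+\nu\norm{\Delta e_u}_{L^2(T)}+\norm{\nabla e_p}_{L^2(T)}.
\end{align*}
Squaring, summing over $T\in\mathcal{T}_h$, using $(a+b+c+d)^2\le 4(a^2+b^2+c^2+d^2)$ and taking square roots, it suffices to bound the four global quantities $\norm{\nabla e_u}_L$, $\norm{e_u}_L$, $\big(\sum_{T}\norm{\Delta e_u}_{L^2(T)}^2\big)^{1/2}$ and $\norm{\nabla e_p}_L$.

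For the velocity error, estimate (\ref{eq_30}) with $v=u\in[H^{k+1}(\Omega)]^d$ and $t=k+1$ yields $\norm{e_u}_L\le Ch^{k+1}\norm{u}_{[H^{k+1}(\Omega)]^d}$ and $\norm{\nabla e_u}_L\le Ch^{k}\norm{u}_{[H^{k+1}(\Omega)]^d}$, both of which are $O(h^{k-1})$ (in fact of higher order) since $k\ge 1$. For the Laplacian term, estimate (\ref{eq_30_1}) with $t=k+1$, which is admissible because $k+1\ge 2$, gives $\big(\sum_{T}\norm{\Delta e_u}_{L^2(T)}^2\big)^{1/2}\le Ch^{k-1}\norm{u}_{[H^{k+1}(\Omega)]^d}$. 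For the pressure gradient, estimate (\ref{eq_30}) with $v=p\in H^{k}(\Omega)$ and $t=k$, which is admissible since $1\le k\le k+1$, gives $\norm{\nabla e_p}_L\le Ch^{k-1}\norm{p}_{H^{k}(\Omega)}$.

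Combining these four bounds and absorbing the fixed quantities $\norm{U}_{[L^{\infty}(\Omega)]^d}$, $\norm{\nabla U}_{[L^{\infty}(\Omega)]^{d\times d}}$ and $\nu$ (which depend only on $U$, $\nu$ and $\Omega$) into $C$, the dominant contributions are the Laplacian and pressure-gradient terms, each of order $h^{k-1}$, while the remaining two are of order $h^k$ and $h^{k+1}$; hence the full sum is bounded by $Ch^{k-1}\big(\norm{u}_{[H^{k+1}(\Omega)]^d}+\norm{p}_{H^{k}(\Omega)}\big)$, as claimed. The argument is entirely routine; the only point requiring care is the bookkeeping of admissible Sobolev exponents in the interpolation estimates — in particular the fact that $p$ is only assumed to lie in $H^{k}(\Omega)$, which is precisely what limits the rate to $h^{k-1}$ rather than $h^{k}$. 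I do not anticipate any genuine obstacle beyond this.
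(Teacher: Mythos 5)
Your proof is correct and is exactly the argument the paper intends: the paper offers no details beyond the remark that the corollary follows from the componentwise Clément interpolation estimates, and your term-by-term expansion of $\mathcal{L}(I_hu-u,I_hp-p)$ with estimates (\ref{eq_30}) and (\ref{eq_30_1}) is the standard way to fill that in. Your bookkeeping of the admissible exponents (in particular $t=k+1\ge 2$ for the Laplacian term and $t=k$ for the pressure, which is what caps the rate at $h^{k-1}$) is accurate.
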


In this section, we will present and prove several technical results. First observe that the formulation (\ref{eq_03})--(\ref{eq_04}) is weakly consistent in the sense that we have a
modified Galerkin orthogonality relation with respect to the scalar product associated to $A$:

\begin{lemma}\label{lemma_cons} {\em(Consistency).}
Let $({u}, p)$ satisfy (\ref{stoke}) and $({u}_h, p_h)$ be a solution of (\ref{eq_03})--(\ref{eq_04}). Then
there holds
\begin{align}
  A(({u}-{u}_h, p-p_h), ({w}_h,x_h) ) =-S^{\ast}_h(({z}_h,y_h),({w}_h,x_h)), \quad \forall ({w}_h,x_h) \in \cW_h.
\end{align}
\end{lemma}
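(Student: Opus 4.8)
The claim is a modified Galerkin orthogonality for the primal equation, so the natural strategy is to subtract the continuous equation from the discrete one. First I would test the continuous weak form \eqref{cont_week} with the discrete function $(w_h, x_h) \in \cW_h \subset V_0 \times L$, which is an admissible test function on the continuous level; this yields $A((u,p),(w_h,x_h)) = l(w_h)$. Here one needs that the exact solution $(u,p)$ of \eqref{stoke}–\eqref{stoke_1} indeed satisfies \eqref{cont_week} with $g = 0$; since $b(x_h, u) = \int_\Omega x_h \,\nabla\cdot u = 0$ by \eqref{stoke_1}, the cross term vanishes and the identity is exactly the continuous weak formulation, so this step is immediate given the regularity $(u,p) \in [H^2(\Omega)]^d \times H^1(\Omega)$ assumed in Section \ref{sec2}.

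Next I would take the discrete optimality equation \eqref{eq_03}, namely $A((u_h,p_h),(w_h,x_h)) - S^{\ast}_h((z_h,y_h),(w_h,x_h)) = l(w_h)$, and subtract the two identities. Since $l(w_h)$ appears on the right-hand side of both, it cancels, leaving
\begin{align*}
A((u - u_h, p - p_h),(w_h,x_h)) = -\,S^{\ast}_h((z_h,y_h),(w_h,x_h)), \qquad \forall (w_h,x_h) \in \cW_h,
\end{align*}
by bilinearity of $A$ in its first argument. This is exactly the asserted relation, so the proof is essentially a two-line computation.

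There is really no substantive obstacle here — the only points requiring a word of care are: (i) that $\cW_h \subset V_0 \times L$, so that discrete test functions are legitimate in the continuous formulation (this holds by construction, since $W_h = V_0 \cap V_h^{k_1}$ and $Q_h = P^c_{k_3}(\cT_h) \subset L$); and (ii) that the data $f$ is the same in both formulations and enters only through the linear functional $l$, so that the GLS consistency terms involving $f$ on the right-hand side of \eqref{eq_04} do not interfere — indeed they appear only in the \emph{dual} equation \eqref{eq_04}, not in the primal equation \eqref{eq_03} that we are using. I would therefore present this as a short, self-contained argument: state the continuous identity obtained by restricting test functions, state the discrete identity \eqref{eq_03}, subtract, and invoke bilinearity. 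The word ``weakly consistent'' in the surrounding text is justified precisely because the right-hand side is not zero but the adjoint stabilization term $-S^{\ast}_h((z_h,y_h),(w_h,x_h))$, which is of the correct order and will be controlled later in the error analysis.
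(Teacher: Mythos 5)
Your argument is correct and is exactly the paper's proof: the authors likewise obtain the identity by restricting the continuous formulation \eqref{cont_week} to discrete test functions in $\cW_h$ and subtracting the discrete equation \eqref{eq_03}, with the adjoint stabilization term surviving as the consistency error. Your additional remarks on the admissibility of $\cW_h \subset V_0 \times L$ and the non-interference of the GLS data term in \eqref{eq_04} are sound but not needed beyond what the paper states.
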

{\bf Proof.} The result follows by taking the difference between (\ref{cont_week}) and (\ref{eq_03}).
\begin{lemma} \label{lemma_approx}   Let $({u}, p) \in [{{H}^{k+1}}(\Omega)]^{d}\times  {L}^{2}_{0} \bigcap {H}^{k}(\Omega)$. Then,
\begin{align} \label{eq_31_stoke}
\tnorm {({{u}}-{I}_h {{u}}, p-I_h{p})} \leq C h^{k}\left(\norm{ {u}}_{[H^{k+1}(\Omega)]^d} + \norm{ p}_{H^{k}(\Omega)} \right ).
\end{align}
\end{lemma}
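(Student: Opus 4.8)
The plan is to estimate each contribution to the triple norm $\tnorm{(U_h - I_h U_h, \dots)}^2$ separately, using the definition \eqref{eq_711} together with the approximation bounds in the preceding lemmas. Recall that the triple norm of the pair $(u - I_h u, p - I_h p)$ (with zero dual component, so the $S_h^*$ term vanishes) decomposes into three pieces: the primal stabilization $S_h(\cdot,\cdot) = S_g(\cdot,\cdot) + \tilde S_h(\cdot,\cdot)$ and the measurement term $\gamma_M |u - I_h u|_{\omega_M}^2$. I would first note that since $\omega_M \subset \Omega$ and $\xi = \max(\nu, \|U\|_\infty h)$, the measurement term is bounded by $C \xi^{-1} \|u - I_h u\|_{L^2(\Omega)}^2$; by \eqref{eq_30} with $t = k+1$ this is $\lesssim \xi^{-1} h^{2(k+1)} \|u\|_{H^{k+1}}^2$, and since $\xi^{-1} \le \nu^{-1}$ (or $\xi^{-1} \le (\|U\|_\infty h)^{-1}$), one gets a factor $h^{2k}$ in the worst case, which is consistent with the claimed $h^k$ bound on the triple norm.

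Next I would handle the three components of the primal stabilizer. The Galerkin-least-squares term $S_g$ evaluated on $(u - I_h u, p - I_h p)$ is $\gamma_{GLS} \sum_T h_T^2 \xi_T^{-1} \|\mathcal{L}(u - I_h u, p - I_h p)\|_{L^2(T)}^2$; applying Corollary \ref{cor_L}, $\sum_T \|\mathcal{L}(u - I_h u, p - I_h p)\|_{L^2(T)}^2 \lesssim h^{2(k-1)}(\|u\|_{H^{k+1}} + \|p\|_{H^k})^2$, and the factor $h_T^2 \xi_T^{-1} \le h_T^2 \nu^{-1}$ (or $h_T/\|U\|_\infty$) contributes an extra $h^2$, giving $h^{2k}$. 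For the $\tilde S_h$ term: the first piece $\alpha h^{2k} \|\nabla(u - I_h u)\|_{L}^2$ is bounded by $\alpha h^{2k} \cdot C h^{2k} \|u\|_{H^{k+1}}^2$ by \eqref{eq_30} — this is even higher order. The face-jump term $\gamma_u \sum_F h_F \xi_F \|\sjump{\nabla(u - I_h u)\cdot n}\|_{L^2(F)}^2$: here I would use $\|\sjump{\nabla(u - I_h u)\cdot n}\|_{L^2(F)} \le 2 \|\nabla(u - I_h u)\|_{L^2(F)}$ (since $u \in H^2$ has single-valued normal derivative traces on interior faces, only the interpolant jumps — actually both terms but bounded the same way), then \eqref{intglobal_trace_1} with $t = k+1$ gives $\sum_F \|\nabla(u - I_h u)\|_{L^2(F)}^2 \lesssim h^{2(k+1) - 3} \|u\|_{H^{k+1}}^2 = h^{2k-1}\|u\|_{H^{k+1}}^2$, and the prefactor $h_F \xi_F \le h_F \cdot \|U\|_\infty h_F$ (or $h_F \nu$) supplies $h^2$ (or $h$), yielding at least $h^{2k}$ (being careful with which branch of $\xi_F$ dominates — the diffusive branch $\xi_F = \nu$ gives $h \cdot h^{2k-1} = h^{2k}$, the convective branch gives $h^2 \cdot h^{2k-1} = h^{2k+1}$). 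Finally, the divergence term $\gamma_{div} \int_\Omega \xi_T (\nabla\cdot(u - I_h u))^2$: since $\nabla \cdot u = g$, one has $\nabla\cdot(u - I_h u) = $ approximation error of $g$ plus error of $u$, but more simply $|\nabla\cdot(u - I_h u)| \le |\nabla(u - I_h u)|$ componentwise, so this is $\lesssim \xi_T h^{2k}\|u\|_{H^{k+1}}^2 \le \|U\|_\infty h \cdot h^{2k}\|u\|_{H^{k+1}}^2$ (convective branch) or $\nu h^{2k}\|u\|_{H^{k+1}}^2$ — again order $h^{2k}$ or better.

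Collecting all pieces: every contribution to $\tnorm{(u - I_h u, p - I_h p)}^2$ is bounded by $C h^{2k}(\|u\|_{H^{k+1}} + \|p\|_{H^k})^2$, possibly with constants depending on $\nu$, $\|U\|_\infty$, $|\Omega|$ and the stabilization parameters. Taking square roots yields \eqref{eq_31_stoke}.

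The main obstacle, I expect, is bookkeeping the $\xi$, $\xi_T$, $\xi_F$ weights consistently. Because $\xi = \max(\nu, \|U\|_\infty h)$ is a max of two quantities scaling differently in $h$, one must check that the worst-case branch in each term still produces at least $h^{2k}$; the measurement term with $\xi^{-1} = \nu^{-1}$ is the tightest (it gives exactly $h^{2(k+1)}/\nu$, hence the claimed $h^k$ is not improvable once we also demand $h$-robustness), and the GLS term with $h_T^2/\nu$ likewise gives exactly $h^{2k}$, so the overall rate $h^k$ in \eqref{eq_31_stoke} is sharp. A secondary technical point is ensuring the Clément interpolant genuinely satisfies \eqref{eq_30}, \eqref{eq_30_1}, \eqref{intglobal_trace_1} up to $t = k+1$ with $u \in H^{k+1}$ (which requires $k+1 > d/2$ for pointwise traces, but here only $L^2$-based trace estimates are used, so this is fine) and that these extend componentwise to the vector-valued $I_h: [L^2(\Omega)]^d \to V_h^k$, as asserted at the end of the interpolation lemma.
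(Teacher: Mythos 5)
Your proof is correct and follows the same route the paper intends: the paper's own proof is a one-line remark deferring to the componentwise interpolation estimates, and your term-by-term bound of each contribution to the triple norm (measurement, GLS via Corollary \ref{cor_L}, the $h^{2k}$-weighted gradient, face-jump via \eqref{intglobal_trace_1}, and divergence terms) is exactly the omitted bookkeeping, with the $\xi$-weights handled consistently.
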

{\bf Proof.} The approximation bounds can be deduced using the component-wise extension of $I_h$ to vector functions.

\begin{lemma} {\em(Continuity).} \label{lemma_conti}   Let $({u}, p) \in [{{H}^{k+1}}(\Omega)]^{d}\times  {L}^{2}_{0} \bigcap {H}^{k}(\Omega)$. Then,
\begin{align} \label{lemma_conti_1}
   A(({u}-I_h {u}, p-I_h p),({z}_h,y_h)) &\leq C h^{k} \left(\norm{ {u}}_{[H^{k+1}(\Omega)]^d} + \norm{p}_{H^{k}(\Omega)} \right ) S^{\ast}_h(({z}_h,y_h),({z}_h,y_h))^{\frac{1}{2}}, 
\end{align}
for all $({z}_h,y_h) \in \cW_h$.
\end{lemma}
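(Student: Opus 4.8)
The plan is to expand $A((u - I_h u, p - I_h p), (z_h, y_h)) = a(u - I_h u, z_h) - b(p - I_h p, z_h) + b(y_h, u - I_h u)$ and bound each of the three terms by $C h^k (\norm{u}_{[H^{k+1}(\Omega)]^d} + \norm{p}_{H^k(\Omega)})$ times $S^\ast_h((z_h,y_h),(z_h,y_h))^{1/2}$. Recall that $S^\ast_h((z_h,y_h),(z_h,y_h)) = \gamma_u^\ast \norm{\nabla z_h}_L^2 + \gamma_p^\ast \norm{y_h}_L^2$, so the target is to produce a factor $\norm{\nabla z_h}_L$ (for terms tested against $z_h$ and its gradient) or $\norm{y_h}_L$ (for the term tested against $y_h$), with the approximation factor absorbing the rest. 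Throughout I will write $\eta_u := u - I_h u$, $\eta_p := p - I_h p$ and use the Clément estimate \eqref{eq_30}, which gives $\norm{\eta_u}_L + h \norm{\nabla \eta_u}_L \le C h^{k+1} \norm{u}_{[H^{k+1}(\Omega)]^d}$ and $\norm{\eta_p}_L \le C h^k \norm{p}_{H^k(\Omega)}$.

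First I would handle the easy terms. For the convection-type terms $\int_\Omega ((U\cdot\nabla)\eta_u + (\eta_u\cdot\nabla)U)\cdot z_h$, I bound $\norm{U}_{[W^{1,\infty}]^d}$ out and get $\le C \norm{\eta_u}_L \norm{z_h}_L$; since $z_h \in W_h = V_0 \cap V_h^{k_1}$ vanishes on $\partial\Omega$, the Poincaré inequality gives $\norm{z_h}_L \le C\norm{\nabla z_h}_L$, so this term is $\le C h^{k+1}\norm{u}_{[H^{k+1}]^d}\norm{\nabla z_h}_L \le C h^k (\cdots)\, S^\ast_h(\cdot)^{1/2}$ (with room to spare). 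For the viscous term $\nu \int_\Omega \nabla\eta_u : \nabla z_h \le \nu \norm{\nabla\eta_u}_L \norm{\nabla z_h}_L \le C h^k \norm{u}_{[H^{k+1}]^d}\norm{\nabla z_h}_L$, which is again of the required form. For $b(p - I_h p, z_h) = \int_\Omega \eta_p\, \nabla\cdot z_h \le \norm{\eta_p}_L \norm{\nabla\cdot z_h}_L \le C h^k \norm{p}_{H^k(\Omega)} \norm{\nabla z_h}_L$, done similarly.

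The one term requiring a little care is $b(y_h, \eta_u) = \int_\Omega y_h\, (\nabla\cdot \eta_u)$, because $\nabla\cdot\eta_u$ is only $O(h^k)$ in $L^2$ while we must match it against $\norm{y_h}_L$ (no gradient of $y_h$ is available in $S^\ast_h$). I would simply write $\int_\Omega y_h\, \nabla\cdot\eta_u \le \norm{\nabla\cdot\eta_u}_L \norm{y_h}_L \le C\norm{\nabla\eta_u}_L\norm{y_h}_L \le C h^k \norm{u}_{[H^{k+1}]^d}\norm{y_h}_L$, and since $\norm{y_h}_L \le (\gamma_p^\ast)^{-1/2} S^\ast_h((z_h,y_h),(z_h,y_h))^{1/2}$ this closes the estimate. (Integration by parts would instead put the divergence on $y_h$, which is not helpful here, so the direct bound is the right move.) Collecting the four contributions and using $\norm{\nabla z_h}_L, \norm{y_h}_L \le C\, S^\ast_h((z_h,y_h),(z_h,y_h))^{1/2}$ with the constant depending on $\gamma_u^\ast, \gamma_p^\ast, \Omega, \nu, U$ yields \eqref{lemma_conti_1}. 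The only mild subtlety — and hence the "main obstacle" — is noticing that every term tested against the dual velocity can be routed through $\norm{\nabla z_h}_L$ thanks to the homogeneous boundary condition on $W_h$, so that no stronger control on $z_h$ than what $S^\ast_u$ provides is ever needed.
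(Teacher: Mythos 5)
Your proposal is correct and follows essentially the same route as the paper's proof: expand $A$ into $a(\cdot,z_h)-b(\cdot,z_h)+b(y_h,\cdot)$, apply Cauchy--Schwarz together with the Cl\'ement estimates, use the Poincar\'e inequality on $z_h\in V_0$ to reduce everything tested against the dual velocity to $\norm{\nabla z_h}_L$, and bound $b(y_h,u-I_hu)$ directly against $\norm{y_h}_L$, exactly as the paper does. One harmless slip: the convective contribution $\int_\Omega (U\cdot\nabla)(u-I_hu)\cdot z_h$ is controlled by $\norm{U}_{[L^{\infty}(\Omega)]^{d}}\norm{\nabla(u-I_hu)}_L\norm{z_h}_L=O(h^{k})$ rather than by $\norm{u-I_hu}_L\norm{z_h}_L=O(h^{k+1})$ as you claim (absent an integration by parts), but since only $O(h^{k})$ is required the conclusion is unaffected.
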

{\bf Proof.} Let us derive the estimate (\ref{lemma_conti_1}). Using the definition of $A(\cdot, \cdot):$
\begin{align} \label{eq_09}
  A((I_h {u}-{u}, I_h p-p),({z}_h,y_h))  =a(I_h {u}-{u},{z}_h) -b(I_h p-p,{z}_h)+b(y_h,I_h {u}-{u}).
\end{align}
Consider the first term on the right hand side of (\ref{eq_09}). Using the Cauchy--Schwarz inequality and Poincar\'{e} inequality,
\begin{align*}
  a(I_h {u}-{u},{z}_h)&\leq  C\norm{U}_{[W^{1,\infty}]^d}(\norm{{u}-{I}_h{{u}} }_L +\norm{\nabla({u}-{I}_h{{u}} )}_L) (\norm{ {z}_h}_L+\norm{\nabla {z}_h}_L) \nonumber \\ &\leq C \norm{U}_{[W^{1,\infty}]^d}h^{k} \norm{{u}}_{[H^{k+1}(\Omega)]^d}S^{\ast}_h(({z}_h,0),({z}_h,0))^{\frac{1}{2}}.
\end{align*}
The second term of (\ref{eq_09}) can be handled as:
\begin{align*}
    b(I_h p-p,{z}_h)\leq \norm{p-{I}_h p }_L  \norm{\nabla \cdot {z}_h}_L \nonumber \\ \leq C h^{k} \norm{{p}}_{H^{k}(\Omega)}S^{\ast}_h(({z}_h,0),({z}_h,0))^{\frac{1}{2}}.
\end{align*}
The last term of (\ref{eq_09}) can be handled as:
\begin{align*}
  b(y_h,I_h {u}-{u})  \leq  \norm{\nabla \cdot ({u}-{I}_h{{u}} )}_L  \norm{y_h}_L \nonumber \\ \leq C h^{k} \norm{{u}}_{k+1}S^{\ast}_h((0,y_h),(0,y_h))^{\frac{1}{2}}.
\end{align*}
 Finally, the result follows by combining all
the above estimates.
\begin{lemma} \label{lemma_01} We assume that the solution $({u}, p) \in [{{H}^{k+1}}(\Omega)]^{d}\times  {L}^{2}_{0} \cap {H}^{k}(\Omega)$ and we consider $({u}_{h}, {p}_{h}) \in \mathcal{V}_h$ and $({z}_{h}, {q}_{h}) \in \mathcal{W}_h$ the discrete solution of (\ref{eq_03})--(\ref{eq_04}).  Then there holds,
\begin{align} \label{eq_02}
\tnorm {({{u}}- {{u}_h}, p-{p}_h),({z}_{h}, {q}_{h})} \leq C \Bigl( h^{k}(\norm{ {u}}_{[H^{k+1}(\Omega)]^d} + \norm{p}_{H^{k}(\Omega)} ) +\gamma^{\frac{1}{2}}_m |\delta {u}|_{\omega_M}\Bigr ).
\end{align}
\end{lemma}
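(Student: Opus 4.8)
The plan is to estimate the triple-norm of the discrete error by splitting it through the interpolant, so write $u - u_h = (u - I_h u) + (I_h u - u_h)$ and similarly for $p$, and set $(e_u,e_p) := (I_h u - u_h, I_h p - p_h) \in \cV_h$ together with $(z_h,q_h) \in \cW_h$ the discrete adjoint solution, so that $(E,Z_h) := ((e_u,e_p),(z_h,q_h))$ lies in the finite element space where the inf-sup condition of Theorem~\ref{inf_sup} applies. By that theorem there exists $(X_h,Y_h)$ with $\tnorm{(X_h,Y_h)} \le 1$ (after normalization) such that $\gamma \tnorm{(E,Z_h)} \le \mathcal{G}((E,Z_h),(X_h,Y_h))$, and the task reduces to bounding the right-hand side.

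The next step is to use linearity and the structure $\mathcal{G}((E,Z_h),(X_h,Y_h)) = A_h(E,Y_h) - S^*_h(Z_h,Y_h) + A_h(X_h,Z_h) + S_h(E,X_h) + \gamma_M (e_u, \cdot)_{\omega_M}$, and to substitute $(e_u,e_p) = (I_h u - u, I_h p - p) + (u - u_h, p - p_h)$. On the part involving $(u-u_h,p-p_h)$ one invokes the consistency relation of Lemma~\ref{lemma_cons} and the optimality equations (\ref{eq_03})--(\ref{eq_04}): the terms $A(({u}-{u}_h,p-p_h),Y_h)$ cancel against $-S^*_h(Z_h,Y_h)$ up to the measurement mismatch, which is where the noise term $m(u_M - u, \cdot) = m(\delta u, \cdot)$ enters and produces the $\gamma_m^{1/2}|\delta u|_{\omega_M}$ contribution after Cauchy--Schwarz in the $m(\cdot,\cdot)$ inner product. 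The remaining pieces all involve the interpolation error $(u - I_h u, p - I_h p)$ paired against test functions, and these are controlled by Lemma~\ref{lemma_approx} (for the stabilization terms $S_h$ and $S^*_h$, and the measurement term), by Lemma~\ref{lemma_conti} (for the $A(\cdot,\cdot)$ term paired with the adjoint variable), and by Corollary~\ref{cor_L} together with the definition (\ref{eq:Sg}) of $S_g$ for the Galerkin-least-squares consistency term, each yielding a factor $h^k(\|u\|_{H^{k+1}} + \|p\|_{H^k})$ times $\tnorm{(X_h,Y_h)} \le 1$.

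Collecting these bounds gives $\gamma \tnorm{(E,Z_h)} \le C\bigl(h^k(\|u\|_{H^{k+1}} + \|p\|_{H^k}) + \gamma_m^{1/2}|\delta u|_{\omega_M}\bigr)$, and then a final triangle inequality $\tnorm{((u-u_h,p-p_h),(z_h,q_h))} \le \tnorm{(E,Z_h)} + \tnorm{(u - I_h u, p - I_h p)}$, with the last term handled again by Lemma~\ref{lemma_approx}, yields (\ref{eq_02}). The main obstacle I anticipate is the careful bookkeeping in the cancellation step: one must track exactly which GLS data terms from the right-hand side of (\ref{eq_04}) combine with $S_g((u_h-u,p_h-p),\cdot)$ to leave only the interpolation-consistency residual $S_g((u - I_h u, p - I_h p),\cdot)$, and one must make sure the measurement term is split as $m(u_h - u_M, \cdot) = m(u_h - u, \cdot) + m(\delta u, \cdot)$ so that the exact-solution part is absorbed into the triple norm while only the noise survives; handling the weighting $\xi^{-1}$ and the scale-dependent factors $h_T^2 \xi_T^{-1}$ consistently across all terms is the fiddly part, but it is routine once the algebraic identity is set up correctly.
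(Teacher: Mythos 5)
Your overall strategy is sound and is, in substance, the same as the paper's: the paper does not invoke Theorem~\ref{inf_sup} abstractly but instead tests the adjoint equation \eqref{eq_04} with $(\zeta_h,\eta_h)=(I_hu-u_h,I_hp-p_h)$ and the consistency relation of Lemma~\ref{lemma_cons} with $(z_h,q_h)$, then adds the two identities so that the cross terms $\pm A((\zeta_h,\eta_h),(z_h,q_h))$ cancel and $\tnorm{(\zeta_h,\eta_h),(z_h,q_h)}^2$ appears on the left; this is precisely the evaluation of $\mathcal{G}$ against the test pair $((\zeta_h,\eta_h),-(z_h,q_h))$ used in the inf-sup proof. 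The remaining bookkeeping you describe (Lemma~\ref{lemma_approx} for the interpolation error, Lemma~\ref{lemma_conti} for the $A$-term against the adjoint, Corollary~\ref{cor_L} for the GLS residual, and the splitting of the measurement term into exact part plus noise) matches the paper's.

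There is, however, one concrete problem in your execution: you apply the inf-sup condition to $(E,Z_h)$ with the dual component carrying a $+$ sign, and then claim that $A((u-u_h,p-p_h),Y_h)$ ``cancels against'' $-S^*_h(Z_h,Y_h)$. Lemma~\ref{lemma_cons} gives $A((u-u_h,p-p_h),Y_h)=-S^*_h((z_h,y_h),Y_h)$, so in $\mathcal{G}((E,Z_h),(X_h,Y_h))$ these two terms do not cancel but add up to $-2S^*_h(Z_h,Y_h)$, which can only be bounded by $2\tnorm{(E,Z_h)}\tnorm{(X_h,Y_h)}$ and therefore cannot be absorbed into the left-hand side $\gamma\tnorm{(E,Z_h)}$; the term $A(X_h,Z_h)$ likewise fails to reduce to interpolation residuals with this sign. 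The fix is to apply the inf-sup (or, equivalently, the explicit test function) to the pair $\bigl((I_hu,I_hp),(0,0)\bigr)-\bigl((u_h,p_h),(z_h,q_h)\bigr)=(E,-Z_h)$, i.e.\ the interpolant of the exact primal--dual solution minus the discrete solution; then \eqref{eq_03} makes $A(E,Y_h)-S^*_h(-Z_h,Y_h)$ collapse to $A((I_hu-u,I_hp-p),Y_h)$ and \eqref{eq_04} reduces the remaining block to $\tilde S_h((I_hu,I_hp),X_h)+S_g((I_hu-u,I_hp-p),X_h)+m(I_hu-u-\delta u,v_h)$. Since $\tnorm{(E,-Z_h)}=\tnorm{(E,Z_h)}$, this one-line correction restores your argument and yields the stated bound.
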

{\bf Proof.} We introduce the discrete errors $\zeta_h= I_h {u}-{u}_h$, $\eta_h= I_h p-p_h$. By this way, 
\begin{align}\label{eq_05}
    \tnorm {({{u}}- {{u}_h}, p-{p}_h),({z}_{h}, {q}_{h})} \leq\tnorm{({{u}}-{I}_h {{u}}, p-I_h{p}),(0,0)}+\tnorm{(\zeta_{h}, \eta_{h}),({z}_{h}, {q}_{h})}.
\end{align}
The first term of (\ref{eq_05}) can be handled by using the Lemma \ref{lemma_approx}. 
Consider the second term of (\ref{eq_05})
\begin{align*}
   \tnorm{(\zeta_{h}, \eta_{h}),({z}_{h}, {q}_{h})}^{2} =S^{\ast}_h(({z}_{h}, {q}_{h}),({z}_{h}, {q}_{h}))+S_h((\zeta_{h}, \eta_{h}),(\zeta_{h}, \eta_{h})) +\gamma_M|\zeta_{h}|_{\omega_M}^{2}. 
\end{align*}
To estimate the right-hand side, we notice that, using the second equation of (\ref{eq_04}) with $(v_h, q_h) =
(\zeta_{h}, \eta_{h})$
\begin{align}\label{eq_06}
    S_h((\zeta_{h}, &\eta_{h}),(\zeta_{h}, \eta_{h})) +\gamma_M|\zeta_{h}|_{\omega_M}^{2}- A((\zeta_{h}, \eta_{h}), ({z}_h,q_h))\nonumber \\&=S_h(I_hu,I_hp),(\zeta_{h}, \eta_{h}))  +m(I_hu-u, \zeta_h)-m(\delta u, \zeta_h)-\gamma_{GLS}\sum_{T \in \cT_h}\int_{T}{f}h_T^2\xi^{-1}_{T}\mathcal{L}(\zeta_h,\eta_h) \dx.
\end{align}
Using Lemma \ref{lemma_cons}, we obtained 
\begin{align}\label{eq_07}
    A(({{u}}-{I}_h {{u}}, p-I_h{p}),(z_h,q_h))+A((\zeta_{h}, \eta_{h}),(z_h,q_h))=-S^{\ast}_h((z_h,q_h),(z_h,q_h)).
\end{align}
Adding (\ref{eq_06}) and (\ref{eq_07}),
\begin{align}
& S^{\ast}_h(({z}_{h}, {q}_{h}),({z}_{h}, {q}_{h}))+S_h((\zeta_{h}, \eta_{h}),(\zeta_{h}, \eta_{h})) +\gamma_M|\zeta_{h}|_{\omega_M}^{2} \nonumber \\  &=\underbrace{A((I_h {u}-{u}, I_h p-p),(z_h,y_h))} \nonumber \\&+\underbrace{S_h(I_hu,I_hp),(\zeta_{h}, \eta_{h}))-\gamma_{GLS}\sum_{T \in \cT_{h} } \int_{T}h_T^2\xi^{-1}_{T}\mathcal{L}(u,p)\mathcal{L}(\zeta_h,\eta_h))} \dx\nonumber\\ & +\underbrace{m(I_hu-u, \zeta_h)-m(\delta u, \zeta_h)} \nonumber\\ &=(1) +(2)+(3). \label{eq_08}
\end{align}
We bound the terms  (1)--(3) term by term. The first term is handled by using  Lemma \ref{lemma_conti}
\begin{align}
   A((I_h {u}-{u}, I_h p-p),({z}_h,q_h)) \leq C h^{k} \left(\norm{ {u}}_{[H^{k+1}(\Omega)]^d} + \norm{p}_{H^{k}(\Omega)} \right )S^{\ast}_h(({z}_h,0),({z}_h,0))^{\frac{1}{2}}.
\end{align}
Consider the second term on the right hand side of (\ref{eq_08})
\begin{align}\label{eq_10}
S_h(I_h {u},I_hp)&,(\zeta_{h}, \eta_{h}))-\gamma_{GLS}\sum_{T \in \cT_{h} } \int_{T}h_T^2\xi^{-1}_{T}\mathcal{L}(u,p)\mathcal{L}(\zeta_h,\eta_h) \dx \nonumber\\ &=(h^{2k} \nabla{I_h {u}_h},\nabla \zeta_{h})+{\gamma_{GLS} \sum_{T \in \cT_{h}} \int_{T}h_T^2\xi^{-1}_{T} \mathcal{L}(I_hu,I_hp)\mathcal{L}(\zeta_h,\eta_h) \dx} \nonumber \\&-\gamma_{GLS}\sum_{T \in \cT_{h} } \int_{T}h_T^2\xi^{-1}_{T}\mathcal{L}(u,p)\mathcal{L}(\zeta_h,\eta_h) \dx+\gamma_u \sum_{F \in \mathcal{F}^{int}_h} \int_{F}h_F \xi_{F}\sjump{\nabla{I_h {u}_h  \cdot {n}}}\sjump{\nabla{\zeta_{h} \cdot {n}}} \ds \nonumber\\&+\gamma_{div} \int_{\Omega}\xi_{T}(\nabla \cdot I_h {u}_h)(\nabla \cdot \zeta_{h}) \dx.
\end{align}
We now estimate the terms on the right hand side of (\ref{eq_10}). Using the $H^{1}$-stability of $I_h$, the first term of (\ref{eq_10}) can be handled as:
\begin{align*}
 (h^{2k} \nabla{I_h {u}},\nabla \zeta_{h})   \leq C h^{k} \norm{{u}}_{[H^{1}(\Omega)]^d} S_h((\zeta_{h}, \eta_{h}),(\zeta_{h}, \eta_{h}))^{\frac{1}{2}}.
\end{align*}
Consider the next two terms of (\ref{eq_10}). Using the Cauchy-- Schwarz inequality and Corollary \ref{cor_L} we obtain
\begin{align*}
   \gamma_{GLS} \sum_{T \in \cT_{h}} &\int_{T}h_T^2 \xi^{-1}_{T}\mathcal{L}(I_hu,I_hp)\mathcal{L}(\zeta_h,\eta_h) \dx-\gamma_{GLS}\sum_{T \in \tau_{h} } \int_{T}h_T^2\xi^{-1}_{T}\mathcal{L}(u,p)\mathcal{L}(\zeta_h,\eta_h) \dx \nonumber\\ =&  {\gamma_{GLS} \sum_{T \in \cT_{h}} \int_{T}h_T^2\xi^{-1}_{T}\mathcal{L}(I_hu-u,I_hp-p)\mathcal{L}(\zeta_h,\eta_h) \dx} \nonumber \\ &\leq \left(\gamma_{GLS}\sum_{T \in \cT_{h}} \int_{T}h_T^2 \xi^{-1}_{T}\mathcal{L}(I_hu-u,I_hp-p)^2 \dx \right)^{\frac{1}{2}}\left(\gamma_{GLS}\sum_{T \in \cT_{h}} \int_{T}h_T^2 \xi^{-1}_{T}\mathcal{L}(\zeta_h,\eta_h)^2 \dx \right)^{\frac{1}{2}} \nonumber \\ &\leq C h^{k} \left(\norm{ {u}}_{[H^{k+1}(\Omega)]^d} + \norm{p}_{H^{k}(\Omega)} \right ) S_h((\zeta_{h}, \eta_{h}),(\zeta_{h}, \eta_{h}))^{\frac{1}{2}}.
\end{align*}
The next term of (\ref{eq_10}) can be handled by using the Cauchy--Schwarz inequality and the estimate (\ref{intglobal_trace_1}),
\begin{align*}
  \gamma_{u}\sum_{F \in \cF^{int}_h} \int_{F}h_F\xi_{F} &\sjump{\nabla{I_h {u}_h  \cdot {n}}}\sjump{\nabla{\zeta_{h} \cdot {n}}} \ds \\ &\leq \left(\gamma_{u}\sum_{F \in \mathcal{F}^{int}_h} \int_{F}h_F \xi_{F}\sjump{\nabla{I_h {u}_h  \cdot {n}}}^2 \ds \right)^{\frac{1}{2}}\left(\gamma_{u}\sum_{F \in \mathcal{F}^{int}_h} \int_{F}h_F \xi_{F}\sjump{\nabla{\zeta_{h} \cdot {n}}}^2 \ds \right)^{\frac{1}{2}} \nonumber \\ &\leq \left(\gamma_{u}\sum_{F \in \mathcal{F}^{int}_h} \int_{F}h_F\xi_{F} \sjump{\nabla{(I_h {u}- {u}) \cdot {n}}}^2 \ds \right)^{\frac{1}{2}}S_h((\zeta_{h}, \eta_{h}),(\zeta_{h}, \eta_{h}))^{\frac{1}{2}}  \nonumber \\ &\leq C h^{k} \norm{ {u}}_{[H^{k+1}(\Omega)]^d}  S_h((\zeta_{h}, \eta_{h}),(\zeta_{h}, \eta_{h}))^{\frac{1}{2}}.
\end{align*}
Put together (\ref{eq_10}) leads to
\begin{align*}
  S_h(I_h {u},I_hp)&,(\zeta_{h}, \eta_{h}))-\gamma_{GLS}\sum_{T \in \cT_{h} } \int_{T}h_T^2\xi^{-1}_{T}\mathcal{L}(u,p)\mathcal{L}(\zeta_h,\eta_h) \dx \\ &\leq C h^{k} \left(\norm{ {u}}_{[H^{k+1}(\Omega)]^d} + \norm{p}_{H^{k}(\Omega)} \right )S_h((\zeta_{h}, \eta_{h}),(\zeta_{h}, \eta_{h}))^{\frac{1}{2}}.
\end{align*}
The last term can be handled as:
\begin{align*}
| m(I_h {u}-{u}, \zeta_h)-m(\delta {u}, \zeta_h)  | &\leq C (|I_h {u}-{u}|_{\omega_M} +|\delta {u}|_{\omega_M}) \gamma_{m}|\zeta_h|_{\omega_M} \nonumber \\ &\leq C (h^{k+1} \norm{ {u}}_{[H^{k+1}(\Omega)]^d} + \gamma^{\frac{1}{2}}_m |\delta {u}|_{\omega_M}) \gamma^{\frac{1}{2}}_{m} |\zeta_h|_{\omega_M}.
\end{align*}
Put together (\ref{eq_08}) leads to
\begin{align*}
  \tnorm{(\zeta_{h}, \eta_{h}),({z}_{h}, {q}_{h})}^{2} &\leq C\Bigl( h^{k} \Bigl(\norm{ {u}}_{[H^{k+1}(\Omega)]^d} + \norm{p}_{H^{k}(\Omega)} \Bigr )+ \gamma^{\frac{1}{2}}_m |\delta {u}|_{\omega_M} \Bigr)\tnorm{(\zeta_{h}, \eta_{h}),({z}_{h}, {q}_{h})} \nonumber \\ \Rightarrow \tnorm{(\zeta_{h},  \eta_{h}),({z}_{h}, {q}_{h})}&   \leq C( h^{k} \Bigl(\norm{ {u}}_{[H^{k+1}(\Omega)]^d} + \norm{p}_{H^{k}(\Omega)} \Bigr )+ \gamma^{\frac{1}{2}}_m |\delta {u}|_{\omega_M}).
\end{align*}
{The combination of the above estimates concludes the claim.}
\begin{corollary}\label{cor_cong}
Under the same assumptions as for Lemma \ref{lemma_01}, there holds
\begin{align} \label{eq_11}
\norm {{u}-{u}_h}_{V} \leq C \Bigl(\norm{ {u}}_{[H^{k+1}(\Omega)]^d} + \norm{p}_{H^{k}(\Omega)}  +h^{-k}\gamma^{\frac{1}{2}}_m |\delta {u}|_{\omega_M}\Bigr ), 
\end{align}
and 
\begin{align} \label{equa011}
\norm{{u}_h}_{V}\leq C\Bigl(\norm{ {u}}_{[H^{k+1}(\Omega)]^d} + \norm{p}_{H^{k}(\Omega)}  +\gamma^{\frac{1}{2}}_m h^{-k} |\delta {u}|_{\omega_M}\Bigr ).
\end{align}
  \end{corollary}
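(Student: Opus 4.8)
\textbf{Proof proposal for Corollary \ref{cor_cong}.}

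The plan is to deduce both bounds from the triangle inequality $\norm{u - u_h}_V \le \norm{u - I_h u}_V + \norm{I_h u - u_h}_V = \norm{u - I_h u}_V + \norm{\zeta_h}_V$, estimating the two pieces separately. The interpolation error $\norm{u - I_h u}_V$ is controlled by the standard estimate \eqref{eq_30} with $t = k+1$, giving a bound $C h^k \norm{u}_{[H^{k+1}(\Omega)]^d}$, which is of the form claimed (in fact with an extra factor $h^k$ to spare). The genuinely new work is to bound $\norm{\zeta_h}_V = \norm{\nabla \zeta_h}_L + \norm{\zeta_h}_L$ (the $H^1$ norm on the discrete space) in terms of $\tnorm{(\zeta_h,\eta_h),(z_h,q_h)}$, which Lemma \ref{lemma_01} has already shown is bounded by $C\bigl(h^k(\norm{u}_{[H^{k+1}(\Omega)]^d}+\norm{p}_{H^k(\Omega)}) + \gamma_m^{1/2}|\delta u|_{\omega_M}\bigr)$.

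First I would invoke the Poincar\'e inequality \eqref{poincare_ineq}: $\norm{\zeta_h}_{H^1(\Omega)} \le C_p(|\zeta_h|_{\omega_M} + \norm{\nabla \zeta_h}_L)$. The term $|\zeta_h|_{\omega_M}$ is, up to the constant $\gamma_M^{-1/2}\xi^{1/2}$, already a summand inside $\tnorm{\cdot}^2$, so it is directly controlled. For $\norm{\nabla \zeta_h}_L$, the key observation is that the triple norm contains the term $\tilde S_h$ which includes $\alpha(h^{2k}\nabla u_h, \nabla v_h)$; taking $u_h = v_h = \zeta_h$ this contributes $\alpha h^{2k}\norm{\nabla \zeta_h}_L^2$ to $S_h(U_h,U_h)$, hence to $\tnorm{(\zeta_h,\eta_h),(z_h,q_h)}^2$. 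Therefore $h^{2k}\norm{\nabla \zeta_h}_L^2 \le \alpha^{-1}\tnorm{(\zeta_h,\eta_h),(z_h,q_h)}^2$, i.e. $\norm{\nabla \zeta_h}_L \le C h^{-k}\tnorm{(\zeta_h,\eta_h),(z_h,q_h)}$. Combining with the Poincar\'e bound and Lemma \ref{lemma_01} yields $\norm{\zeta_h}_V \le C h^{-k}\bigl(h^k(\norm{u}_{[H^{k+1}(\Omega)]^d}+\norm{p}_{H^k(\Omega)}) + \gamma_m^{1/2}|\delta u|_{\omega_M}\bigr) = C\bigl(\norm{u}_{[H^{k+1}(\Omega)]^d}+\norm{p}_{H^k(\Omega)} + h^{-k}\gamma_m^{1/2}|\delta u|_{\omega_M}\bigr)$. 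Adding the interpolation error (which is dominated by this) gives \eqref{eq_11}. The second bound \eqref{equa011} then follows immediately from \eqref{eq_11} by one more triangle inequality $\norm{u_h}_V \le \norm{u}_V + \norm{u - u_h}_V$ and absorbing $\norm{u}_V \le C\norm{u}_{[H^{k+1}(\Omega)]^d}$.

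The main obstacle — really the only subtle point — is recognizing that the $h^{-k}$ blow-up in the noise term is unavoidable and comes precisely from the weak ($h^{2k}$-weighted) control that $\tilde S_h$ provides on the full gradient: the stabilization only penalizes $\nabla u_h$ with a vanishing weight, so recovering the $V$-norm from the triple norm necessarily costs a factor $h^{-k}$. One should double-check that no sharper route exists (e.g. via the $\gamma_u$ jump term plus an inverse inequality), but since $\zeta_h$ need not be continuous-gradient and the jump control also carries an $h_F$ weight, the $h^{-k}$ factor is the honest bound. Everything else is routine application of \eqref{poincare_ineq}, \eqref{eq_30}, and Lemma \ref{lemma_01}.
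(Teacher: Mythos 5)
Your proposal is correct and follows essentially the same route as the paper: both rest on the Poincar\'e inequality \eqref{poincare_ineq}, the $\alpha h^{2k}$-weighted gradient term in $\tilde S_h$ (which is exactly where the $h^{-k}$ factor enters), and Lemma \ref{lemma_01}, with \eqref{equa011} following by a triangle inequality. The only cosmetic difference is your intermediate split through $I_h u$; the paper applies the triple norm directly to $u-u_h$, which is legitimate since \eqref{eq_711} is defined on the sum space $([H^{2}(\Omega)]^d+V_h)\times (H^{1}(\Omega)+Q_h)$.
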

{\bf Proof.} Using Lemma \ref{lemma_01}, we see that
\begin{align*}
    \norm {{u}-{u}_h}_{V}= h^{-k} \norm {h^{k}({u}-{u}_h)}_{V} \leq & Ch^{-k}(S_h({u}-{u}_h,{u}-{u}_h)+|{u}-{u}_h|_{\omega_M}^2) \\ \leq & Ch^{-k}\Bigl(h^{k} (\norm{ {u}}_{[H^{k+1}(\Omega)]^d} + \norm{p}_{H^{k}(\Omega)}  ) +\gamma^{\frac{1}{2}}_m |\delta{u}|_{\omega_M}\Bigr) \\\leq &C \Bigl(\norm{{u}}_{[H^{k+1}(\Omega)]^d} + \norm{p}_{H^{k}(\Omega)}  +h^{-k}\gamma^{\frac{1}{2}}_m |\delta {u}|_{\omega_M}\Bigr ).
\end{align*}
The estimate (\ref{equa011}) is immediate by using the triangle inequality and the estimate (\ref{eq_11}).

The following theorem is the main theoretical result of the paper and states an error estimate for this method.
\begin{theorem} \label{th_01}
Let ${f} \in [L^{2}(\Omega)]^d$ and  ${u}_{M}={u}|_{{\omega_M}}+\delta {u}$ be given. We assume that $({u}, p) \in [{{H}^{k+1}}(\Omega)]^{2}\times  {L}^{2}_{0} \cap H^{k}(\Omega)$ is the solution of (\ref{cont_week}), and consider   $({u}_{h}, {p}_{h}) \in \mathcal{V}_h$ and $({z}_{h}, {y}_{h}) \in \mathcal{W}_h$ the discrete solution of  (\ref{eq_03})--(\ref{eq_04}). Then for all $B \subset \subset \Omega$ there exists $\tau \in (0,1)$ such that 
  \begin{align} \label{eq_12}
 |{u}-{u}_h|_{B}  \leq C h^{k\tau}\Bigl(\norm{ {u}}_{[H^{k+1}(\Omega)]^d} + \norm{ p}_{H^{k}(\Omega)} +h^{-k} |\delta {u}|_{\omega_M}\Bigr).
  \end{align}
  \end{theorem}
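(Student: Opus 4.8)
\noindent{\bf Sketch of proof.} The plan is to feed the error pair into the conditional stability estimate of Theorem~\ref{cor_stab}. Set $w:=u-u_h$ and $\pi:=p-p_h$; since $u\in[H^2(\Omega)]^d$ and $p\in H^1(\Omega)$, while $u_h\in V_h^k$ and $p_h\in Q^0_h$ are $H^1$-conforming, we have $(w,\pi)\in[H^1(\Omega)]^d\times H^1(\Omega)$, and this pair solves \eqref{stoke_2}--\eqref{stoke_3} with $\tilde f:=\mathcal{L}(w,\pi)\in V_0'$, understood via $\langle\tilde f,v\rangle:=a(w,v)-b(\pi,v)$, and $\tilde g:=\nabla\cdot w\in L$. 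Writing $A:=\norm{u}_{[H^{k+1}(\Omega)]^d}+\norm{p}_{H^{k}(\Omega)}$ and $\delta:=|\delta u|_{\omega_M}$, Theorem~\ref{cor_stab} (for the given $B$ and the associated $\tau\in(0,1)$) gives
\begin{equation*}
|w|_B\leq C\bigl(\norm{\tilde f}_{V_0'}+\norm{\tilde g}_L+\norm{w}_L\bigr)^{1-\tau}\bigl(\norm{\tilde f}_{V_0'}+\norm{\tilde g}_L+|w|_{\omega_M}\bigr)^{\tau},
\end{equation*}
so the whole argument reduces to the two bounds
\begin{equation*}
\norm{w}_L\leq C\bigl(A+h^{-k}\delta\bigr),\qquad \norm{\tilde f}_{V_0'}+\norm{\tilde g}_L+|w|_{\omega_M}\leq C\bigl(h^{k}A+\delta\bigr).
\end{equation*}
Indeed, for $h\leq1$ one has $h^{k}A+\delta\leq A+h^{-k}\delta$ and $h^{k}A+\delta=h^{k}(A+h^{-k}\delta)$, so substituting the two bounds into the displayed inequality yields $|w|_B\leq Ch^{k\tau}(A+h^{-k}\delta)$, which is \eqref{eq_12}.

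Most ingredients are already available. The estimate $\norm{w}_L\leq\norm{w}_V\leq C(A+h^{-k}\delta)$ is Corollary~\ref{cor_cong} (fixed parameters absorbed into $C$). From Lemma~\ref{lemma_01} together with the definition \eqref{eq_711} of the triple norm (using $S_g\leq S_h$ and $\tilde S_h\geq0$ as quadratic forms) one reads off
\begin{equation*}
S_h((w,\pi),(w,\pi))^{1/2}+\gamma_M^{1/2}|w|_{\omega_M}+S^{\ast}_h((z_h,y_h),(z_h,y_h))^{1/2}\leq C(h^{k}A+\delta);
\end{equation*}
this gives $|w|_{\omega_M}\leq C(h^{k}A+\delta)$ immediately, and, since $\xi_T\geq\nu$ and the divergence penalty sits inside $\tilde S_h\leq S_h$, also $\norm{\tilde g}_L=\norm{\nabla\cdot w}_L\leq C\nu^{-1/2}S_h((w,\pi),(w,\pi))^{1/2}\leq C(h^{k}A+\delta)$.

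The remaining and principal task, which I expect to be the main obstacle, is the bound $\norm{\tilde f}_{V_0'}=\sup_{v\in V_0,\,\norm{v}_V\leq1}\bigl(a(w,v)-b(\pi,v)\bigr)\leq C(h^{k}A+\delta)$. Fixing such a $v$, I would split $v=(v-I_hv)+I_hv$ with $I_hv\in W_h$ the Cl\'ement interpolant (which preserves the homogeneous boundary condition). On the discrete part, Lemma~\ref{lemma_cons} gives $a(w,I_hv)-b(\pi,I_hv)=A((w,\pi),(I_hv,0))=-S^{\ast}_u(z_h,I_hv)$, which by Cauchy--Schwarz and the $H^1$-stability \eqref{eq_Istab} of $I_h$ is $\leq CS^{\ast}_h((z_h,y_h),(z_h,y_h))^{1/2}\norm{\nabla v}_L\leq C(h^{k}A+\delta)$. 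For the remaining part I would integrate by parts elementwise: because $u\in[H^2(\Omega)]^d$ has single-valued normal derivative across interior faces, and $\pi$ and $v-I_hv$ are $H^1$-functions with $v-I_hv$ vanishing on $\partial\Omega$, the interior-face pressure terms and all $\partial\Omega$-contributions cancel, leaving
\begin{equation*}
a(w,v-I_hv)-b(\pi,v-I_hv)=\sum_{T\in\cT_h}\int_T\mathcal{L}(w,\pi)\cdot(v-I_hv)\dx-\nu\sum_{F\in\mathcal{F}^{int}_h}\int_F\sjump{\nabla u_h\cdot n}\cdot(v-I_hv)\ds.
\end{equation*}
I would bound the volume sum by Cauchy--Schwarz with weights $h_T^2\xi_T^{-1}$ on $\mathcal{L}(w,\pi)$ and $h_T^{-2}\xi_T$ on $v-I_hv$: since $\gamma_{GLS}\sum_T h_T^2\xi_T^{-1}\norm{\mathcal{L}(w,\pi)}_{L^2(T)}^2=S_g((w,\pi),(w,\pi))\leq S_h((w,\pi),(w,\pi))$, the first factor is $\leq C(h^{k}A+\delta)$, while $\bigl(\sum_T h_T^{-2}\xi_T\norm{v-I_hv}_{L^2(T)}^2\bigr)^{1/2}\leq C\norm{\nabla v}_L$ by the Cl\'ement estimate \eqref{eq_30} and the boundedness of $\xi_T$. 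For the face sum I would use weights $h_F\xi_F$ and $h_F^{-1}\xi_F^{-1}$: since $\sjump{\nabla u\cdot n}=0$ one has $\sjump{\nabla u_h\cdot n}=-\sjump{\nabla w\cdot n}$, so $\gamma_u\sum_F h_F\xi_F\norm{\sjump{\nabla u_h\cdot n}}_{L^2(F)}^2$ is part of $\tilde S_h((w,\pi),(w,\pi))\leq S_h((w,\pi),(w,\pi))$ and thus $\leq C(h^{k}A+\delta)^2$, while $\bigl(\sum_F h_F^{-1}\xi_F^{-1}\norm{v-I_hv}_{L^2(F)}^2\bigr)^{1/2}\leq C\nu^{-1/2}\norm{\nabla v}_L$ by the trace inequality \eqref{trace_ineq1}, the estimate \eqref{eq_30} and $\xi_F\geq\nu$, the factor $\nu^{-1/2}$ being absorbed by the $\nu$ in front. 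Adding the three pieces gives $\norm{\tilde f}_{V_0'}\leq C(h^{k}A+\delta)$, completing the proof. The delicate points are the elementwise integration by parts — where one must exploit $u\in H^2$ and the $H^1$-conformity of $p_h$ so that only the gradient jumps of $u_h$ survive — and the choice of the mesh weights, tuned so that the element residual is matched by the Galerkin least-squares stabilization, the gradient jumps by the jump penalty, and the interpolation errors of $v$ by the Cl\'ement bounds.
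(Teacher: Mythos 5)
Your proposal is correct and follows essentially the same route as the paper: feed the error pair into the conditional stability estimate of Theorem~\ref{cor_stab}, control the dual norm of the residual by Cl\'ement interpolation, elementwise integration by parts and the weak consistency of Lemma~\ref{lemma_cons}, match the element residual against the Galerkin least-squares term and the gradient jumps against the face penalty, and control $\norm{u-u_h}_L$ via the Poincar\'e-type bound. The only (immaterial) differences are bookkeeping: you treat $\tilde f$ and $\tilde g$ separately and read $\norm{\nabla\cdot(u-u_h)}_L$ directly off the divergence stabilizer, whereas the paper bundles both residuals into a single functional and handles the divergence through the pairing $b(r-q_h,u_h)$.
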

{\bf Proof.} Let us first consider the weak formulation of the problem satisfied by $(\zeta, \eta)=({u}-{u}_h, p-p_h)$.
\begin{align*}
A((\zeta, \eta),({w},r))= ({f},{w})_{L}- A(({u}_h, p_h),({w},r)).
\end{align*}
We introduce ${u}_h$ and $p_h$ being fixed. The linear forms $r_f$ and $r_g$ on $V_0$ and $L$ respectively defined by: For all ${w} \in V_0$ and $r \in L$
\begin{align}\label{eq_15_1}
    \langle r_f,{w} \rangle_{V'_{0},V} +(r_g,r)_L:=({f},{w})_{L}- A(({u}_h, p_h),({w},r)).
\end{align}
It follows that $(\zeta, \eta)$ is the solution of (\ref{stoke_2})--(\ref{stoke_3}) with $f$ and $g$ in the right hand sides replaced respectively by $r_f$ and $r_g$. Applying now corollary \ref{cor_stab}, we directly get
\begin{align}\label{eq_131}
    |\zeta|_{B}\leq C (\norm{r_f}_{V'_0}+\norm{r_g}_{L} + \norm{\zeta}_{L})^{1-\tau} (\norm{r_f}_{V'_0}+\norm{r_g}_{L} + |\zeta|_{\omega_M})^{\tau}.
\end{align}
Using (\ref{eq_03}), we can write the residuals: for all $({w}_h,q_h) \in \cW_h$
\begin{align}\label{eq_13}
    <r_f,{w}>_{V'_{0},V} +(r_g,r)_L:=({f},{w}-{w}_h)_{L}- A(({u}_h, p_h),({w}-{w}_h,r-q_h))-S_h^{\ast}(({z}_{h}, {y}_{h}),({w}_h,q_h)).
\end{align} 

We take ${w}_{h}=I_h{w}$ and $q_h=I_h r$ in (\ref{eq_13}). Now, let us estimate the terms on the right hand side of (\ref{eq_13}). Consider the first two terms of (\ref{eq_13}) 
\begin{align}\label{eq_14}
 ({f},{w}-{w}_h)_{L}- &A(({u}_h, p_h),({w}-{w}_h,r-q_h))  \nonumber \\&=({f},{w}-{w}_h)_{L}-(a({u}_h,{w}-{w}_h)  -b(p_h,{w}-{w}_h))+b(r-q_h,{u}_h)).
\end{align}
Applying an integration by parts to the first two terms of (\ref{eq_14}) and using Lemma \ref{lemma_01},
\begin{align}
({f},{w}-{w}_h)_{L}-&(a({u}_h,{w}-{w}_h) -b(p_h,{w}-{w}_h) ) \nonumber \\&=|\sum_{T \in \cT_h}\int_{T}\mathcal{L}(u,p)( {w}-{w}_h) \dx-\sum_{T \in \cT_h}\int_{T}\mathcal{L}(u_h,p_h)( {w}-{w}_h) \dx | \nonumber \\ &+ \sum_{F \in \mathcal{F}^{int}_h} \int_{F}|\sjump{\nabla ({u}-{u}_h) \cdot {n}}||({w}-{w}_h)| \ds \nonumber \\ &=|\sum_{T \in \cT_h}\int_{T}\mathcal{L}(u-u_h,p-p_h)( {w}-{w}_h) \dx| +\sum_{F \in \mathcal{F}^{int}_h} \int_{F}|\sjump{\nabla ({u}-{u}_h) \cdot {n}}||({w}-{w}_h)| \ds \nonumber \\& \leq C\left(\sum_{T \in \cT_h}h_T^2\xi^{-1}_{T}\norm{\mathcal{L}(u-u_h,p-p_h)}_{L^{2}(T)}^2\right)^{\frac{1}{2}} h^{-1} \norm{{w}-{w}_h}_L\nonumber \\& +C\left(\sum_{F \in \mathcal{F}^{int}_h} \int_{F}h_F \xi_{F}\sjump{\nabla ({u}-{u}_h) \cdot {n}}^2 \ds\right)^{\frac{1}{2}} \left(\sum_{F \in \mathcal{F}^{int}_h} \int_{F}h^{-1}_F\xi^{-1}_{F}({w}-{w}_h)^2 \ds\right)^{\frac{1}{2}} \nonumber \\& \leq  C\tnorm{({u} -{u}_h,p- {p}_h),(z_h,q_h)}h^{-1} \norm{{w}-{w}_h}_L \nonumber \\&+ C\left(\sum_{F \in \mathcal{F}^{int}_h} \int_{F}h_F\xi_{F}\sjump{\nabla ({u}-{u}_h) \cdot {n}}^2 \ds\right)^{\frac{1}{2}} \left(\sum_{F \in \mathcal{F}^{int}_h} \int_{F}h^{-1}_F \xi^{-1}_{F}({w}-{w}_h)^2 \ds\right)^{\frac{1}{2}} \nonumber \\ & \leq C(h^{k}\norm{{u}}_{[H^{k+1}(\Omega)]^d}+h^{k} \norm{p}_{H^{k}(\Omega)}) \norm{{w}}_{[H^{1}(\Omega)]^d}+h^{k}\norm{{u}}_{[H^{k+1}(\Omega)]^d}\norm{{w}}_{[H^{1}(\Omega)]^d} \nonumber \\ & \leq Ch^{k}(\norm{{u}}_{[H^{k+1}(\Omega)]^d}+ \norm{p}_{H^{k}(\Omega)}) \norm{{w}}_{[H^{1}(\Omega)]^d}.
\end{align}
The last term is handled using the Cauchy--Schwarz inequality and  Lemma \ref{lemma_01},
\begin{align}
 b(r-q_h,{u}_h)   &\leq \norm{r-q_h}_L \norm{\nabla \cdot {u}_h}_L \nonumber \\ &\leq  \norm{r}_{L} \norm{\nabla \cdot {u}_h}_L \nonumber \\ &\leq C(h^{k}(\norm{{u}}_{[H^{k+1}(\Omega)]^d}+ \norm{p}_{H^{k}(\Omega)})+\gamma^{1/2}_m |\delta {u}|_{\omega_M})\norm{r}_{L}.
\end{align}
Applying the above bounds in  (\ref{eq_14}) leads to
\begin{align}
  ({f},{w}-{w}_h)_{L}- &A(({u}_h, p_h),({w}-{w}_h,r-q_h)) \\ \nonumber &\leq C(h^{k}(\norm{{u}}_{[H^{k+1}(\Omega)]^d}+ \norm{p}_{H^{k}(\Omega)})+\gamma^{1/2}_m |\delta {u}|_{\omega_M}) (\norm{{w}}_{[H^{1}(\Omega)]^d}+\norm{r}_{L}).
\end{align}

The last term of (\ref{eq_13}) is handled using Lemma \ref{lemma_01}
\begin{align}
    S_h^{\ast}(({z}_{h}, {y}_{h}),({w}_h,q_h)) &\leq  S_h^{\ast}(({z}_{h}, {y}_{h}),({z}_h,y_h))^{\frac{1}{2}}S_h^{\ast}(({w}_{h}, {q}_{h}),({w}_h,q_h))^{\frac{1}{2}}  \nonumber \\ &\leq C\Bigl(h^{k} \Bigl(\norm{ {u}}_{[H^{k+1}(\Omega)]^d} + \norm{p}_{H^{k}(\Omega)} \Bigr ) +\gamma^{1/2}_m |\delta {u}|_{\omega_M} \Bigr)(\norm{{w}}_{[H^{1}(\Omega)]^d}+\norm{r}_{L}).
\end{align}
As a consequence we can bound the quantity defined in (\ref{eq_13}) by
\begin{align}
    \langle r_f,&{w} \rangle_{V'_{0},V} +(r_g,r)_L \nonumber \\ &\leq C(h^{k} \left(\norm{ {u}}_{[H^{k+1}(\Omega)]^d} + \norm{p}_{H^{k}(\Omega)} \right ) +\gamma^{1/2}_m |\delta {u}|_{\omega_M})(\norm{{w}}_{[H^{1}(\Omega)]^d}+\norm{r}_{L}).
\end{align}
Since this bound holds for all $w \in V_0 \ \text{and} \  r \in L$, we conclude that
\begin{align}
    \norm{r_f}_{V'_{0}} +\norm{r_g}_{L}   &\leq C\Bigl(h^{k} \Bigl(\norm{ {u}}_{[H^{k+1}(\Omega)]^d} + \norm{p}_{H^{k}(\Omega)} \Bigr ) +\gamma^{1/2}_m |\delta {u}|_{\omega_M} \Bigr).
\end{align}
Using the Poincar\'e inequality (\ref{poincare_ineq}), we have the bound
\begin{align*}
  \norm{\zeta}_{L}  \leq C (|\zeta|_{\omega_M}+\norm{\nabla{\zeta}}_{L})  &\leq C h^{-k}\left(|h^{k}\zeta|_{\omega_M}+\norm{h^{k}\nabla{\zeta}}_{L}\right) \\ &\leq C h^{-k} \tnorm{(\zeta,0),(0,0)} \\ & \leq C h^{-k} \Bigl(h^{k} \Bigl(\norm{ {u}}_{[H^{k+1}(\Omega)]^d} + \norm{p}_{H^{k}(\Omega)} \Bigr ) +\gamma^{1/2}_m |\delta {u}|_{\omega_M}\Bigr) \\&\leq C \Bigl(\norm{ {u}}_{[H^{k+1}(\Omega)]^d} + \norm{p}_{H^{k}(\Omega)}  +\gamma^{1/2}_m h^{-k}|\delta {u}|_{\omega_M} \Bigr ).
\end{align*}
Thus, we can bound the terms in the right-hand side of (\ref{eq_131}) in the following way:
\begin{align}
    \norm{r_f}_{V'_{0}} +\norm{r_g}_{L} +\norm{\zeta}_L  &\leq C \Bigl(\norm{ {u}}_{[H^{k+1}(\Omega)]^d} + \norm{p}_{H^{k}(\Omega)}+\gamma^{1/2}_m h^{-k}|\delta {u}|_{\omega_M}  \Bigr ) .
\end{align}
And
\begin{align}
    \norm{r_f}_{V'_{0}} +\norm{r_g}_{L} +|\zeta|_{\omega_M}  &\leq Ch^{k} \Bigl(\norm{ {u}}_{[H^{k+1}(\Omega)]^d} + \norm{p}_{H^{k}(\Omega)} +\gamma^{1/2}_m |\delta {u}|_{\omega_M}  \Bigr ).
\end{align}
Using these two bounds in (\ref{eq_131}), we conclude that
\begin{align*}
    |\zeta|_{B}&\leq  ( \norm{ {u}}_{[H^{k+1}(\Omega)]^d} + \norm{p}_{H^{k}(\Omega)}  +\gamma^{1/2}_m h^{-k}|\delta {u}|_{\omega_M})^{1-\tau} (h^{k} \Bigl(\norm{ {u}}_{[H^{k+1}(\Omega)]^d} + \norm{p}_{H^{k}(\Omega)} \Bigr ) +\gamma^{1/2}_m |\delta {u}|_{\omega_M})^{\tau} \\ &\leq C h^{\tau k} (\norm{ {u}}_{[H^{k+1}(\Omega)]^d} + \norm{p}_{H^{k}(\Omega)}  +\gamma^{1/2}_m h^{-k}|\delta {u}|_{\omega_M}).
\end{align*}
which completes the proof of the theorem.\\

\section{Numerical simulations} \label{sec5}
In this section, we use several two-dimensional numerical examples to apply the methodology described in section \ref{fem_method}.  All experiments have been implemented using the open-source computing
platform FEniCSx \cite{BasixJoss22,UFL14}. A docker image to reproduce the numerical results is available at \url{https://doi.org/10.5281/zenodo.7442458}. The free parameters in (\ref{eq_03})--(\ref{eq_04}) are set to
\[\alpha=\gamma_u=\gamma_{div}=\gamma_{GLS}=\gamma^*_u=\gamma^*_p=10^{-1},\ \gamma_M=1000.\]
in all the numerical examples. In the first example we will verify the convergence orders for different polynomial orders using equal order interpolation $k$ for all variables. Then we consider the same test case using the minimal polynomial order that results in the same error bounds, $k_1=1$, $k_2 = \max\{k-1,1\}$ and $k_3=1$. Finally, we study the robustness of the error estimate with respect to the viscosity for a configuration where the target subdomain $B$ is strictly downwind the data subdomain $\omega_M$, so that every point $B$ is on a streamline intersecting $\omega_M$.
\subsection{Convergence study: Stokes example}\label{num_stoke}
To demonstrate the convergence behaviour of the method introduced in section \ref{fem_method}, we take the test case for the Stokes problem from \cite{Burman:2018:Hansbo}.
Let $\Omega = [0, 1]^2$ be the unit square. We
consider the velocity and pressure fields given by
\begin{align*}
	{\textbf{\rm \textbf{u}}}(x,y)&=(20xy^3,5x^4-5y^4)\\
	p(x,y)&=60x^2y-20y^3-5. 
\end{align*} 
It is simple to demonstrate that $(u, p)$ is a solution to the homogeneous Stokes problem with $\nu = 1$, corresponding to the system (\ref{stoke})--(\ref{stoke_1}) with $U = 0$ and $f = 0$. As a result, we consider (\ref{eq_03})--(\ref{eq_04}) with $U = 0$ and $f = 0$.  Two different geometric settings are considered: one in which the data is continued in the convex geometry, inside the convex hull of $\omega_M$, and one in which the solution is continued in the non-convex geometry, outside the convex hull of $\omega_M$. The convex geometry represented by Fig \ref{domaIn_2}(a) is given as:
$$ \omega_M = \Omega \setminus (0.1, 0.9) \times (0.25, 1), \ \ B = \Omega \setminus (0.1, 0.9)  \times (0.95, 1), $$
and the non-convex geometry represented by fig \ref{domaIn_2}(b) is given as:
$$ \omega_M = \{ (x,y) : 0.25 \leq x \leq 0.75, \  0.05 \leq y \leq 0.5\}, $$
$$ B = \{ (x,y) : 0.125 \leq x \leq 0.875, \  0.05 \leq y \leq 0.95\}. $$
\begin{figure}[!ht]
	\centerline{
		\begin{tabular}{cc}
			\hspace{0cm}
			\resizebox*{5.5cm}{!}{\includegraphics{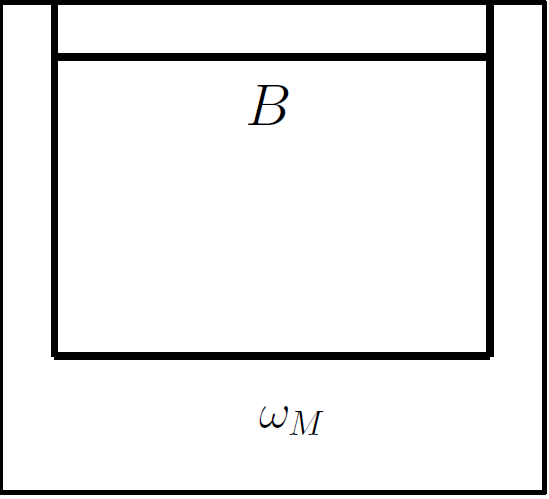}}%
			&\hspace{0.5cm}
			\resizebox*{5.5cm}{!}{\includegraphics{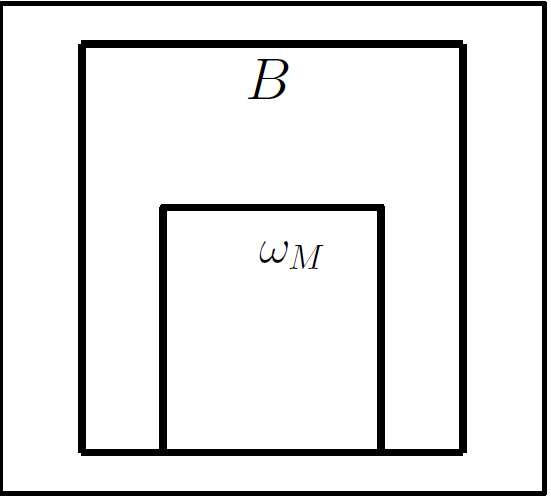}}%
			\\
			{(a)\ Convex geometry} \hspace{1cm}&{\hspace{0cm}(b) \ Non-convex geometry}
		\end{tabular}
	} \caption{{{Sketch of the domains used for computations in section \ref{num_stoke}.}}}.\label{domaIn_2}
\end{figure}

We begin by performing the computation using unperturbed data. The relative $L^2$- norm errors $\norm{u-u_h}_{[L^{2}(B)]^d}/\norm{u}_{[L^{2}(B)]^d}$ are computed in the subdomain $B$.   In addition, we present the history of convergence of the residual quantity for velocity stabilization:
\[\left(\gamma_u \sum_{F \in \mathcal{F}^{int}_h} \int_{F}h_F \sjump{(\nabla{u}_h -\nabla I_h{u}_h )\cdot {n}}^2 \ds \right)^{\frac{1}{2}}.\]
Fig. \ref{domain_3} displays the velocity, pressure errors and residual quantity in the convex and non-convex geometry.
Filled squares, circles and triangles represent the velocity errors; dashed lines represent the pressure error, and the plain thin lines represent the residual.  The expected order of convergence is observed for the residual in Lemma \ref{lemma_01}.  The local velocity error behaves consistently with the convergence rates obtained in Theorem \ref{th_01}.
We can also see in Fig. \ref{domain_3} that the higher order polynomials are more satisfactory for ill-posed problems.
   Next, we proceed with the numerical verification of the above method with data perturbation. Consider the perturbed data  
 \[u_M=u|_{\omega_{M}} +\delta u,\]
 with random perturbations
 \[|\delta u|_{\omega_M} =\mathcal{O}(h^{k-\theta}),\]
 for some $\theta \in \mathbb{N}_0 $  available for implementing our method. According to Theorem \ref{th_01}, we have the estimate
 \begin{align} \label{theta_cong}
 	|u-u_h|_{B}\leq C h^{k \tau- \theta}(\norm{u}_{[H^{k+1}(\Omega)]^d}+\norm{p}_{H^{k}(\Omega)}+1),
 \end{align}
consequently, convergence requires the condition $k \tau- \theta >0.$
Figs \ref{fig_data_convex}--\ref{fig_data_nonconvex} present the convergence history of the velocity, pressure and residual quantities with the data perturbation in the convex and non-convex geometry, respectively. The effect of different values of $\theta$ for relative $L^{2}$-error are studied in Figs \ref{fig_data_convex}--\ref{fig_data_nonconvex}. The relative error for $\theta=0$ is displayed in Figs.  \ref{fig_data_convex}(a) and \ref{fig_data_nonconvex}(a). In both cases, the results are in agreement with the Theorem \ref{th_01}. As stated in (\ref{theta_cong}), the p = 1 polynomial approximation may diverge for $\theta=1$, which is confirmed by Fig. \ref{fig_data_convex}(b). Next, the method $p=2$ converges linearly, whereas $p=3$ still manages to converge, albeit at a slower rate. As shown in the Fig. \ref{fig_data_convex}(c), this result is consistent with the fact that for $\theta=2$, convergence is no longer observed for any $p \leq 3$.
 Similar convergence results are observed in the non-convex domain as shown in Fig. \ref{fig_data_nonconvex}. The results of Figs. \ref{fig_data_convex}--\ref{fig_data_nonconvex} indicate that for the convex geometry $\tau \approx 1$ and for the non-convex geometry $\tau \approx \tfrac23$. In Fig. \ref{domain_31}-\ref{fig_data_nonconvex_1} the same results are presented for the case where the minimal polynomial order is considered that is $k_1=1$, $k_2=\max\{ k-1,1 \}$, $k_3=1$. The results are very similar and we conclude that for these numerical examples there is no disadvantage in taking the smallest possible dual space.

\begin{figure}[!ht]
	\centerline{
		\begin{tabular}{cc} 
			\hspace{0cm}
			\resizebox*{6cm}{!}{\includegraphics{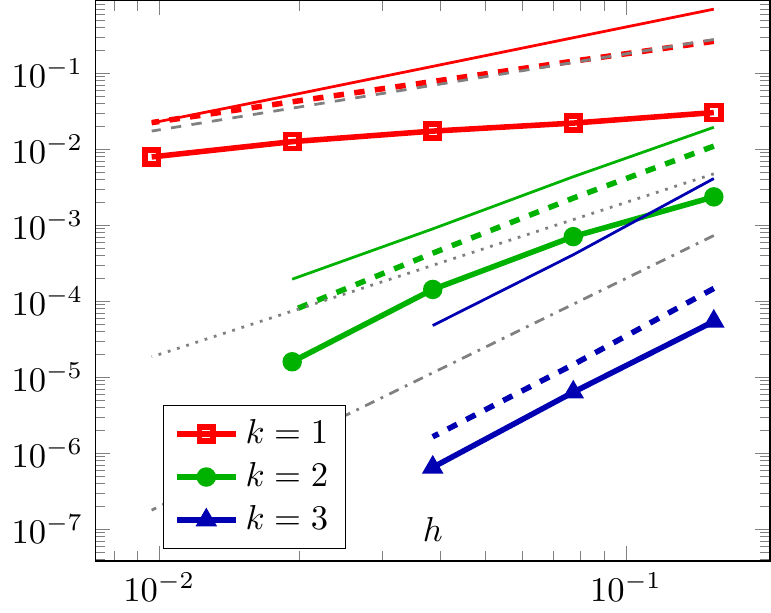}}%
			&\hspace{0cm}
			\resizebox*{6cm}{!}{\includegraphics{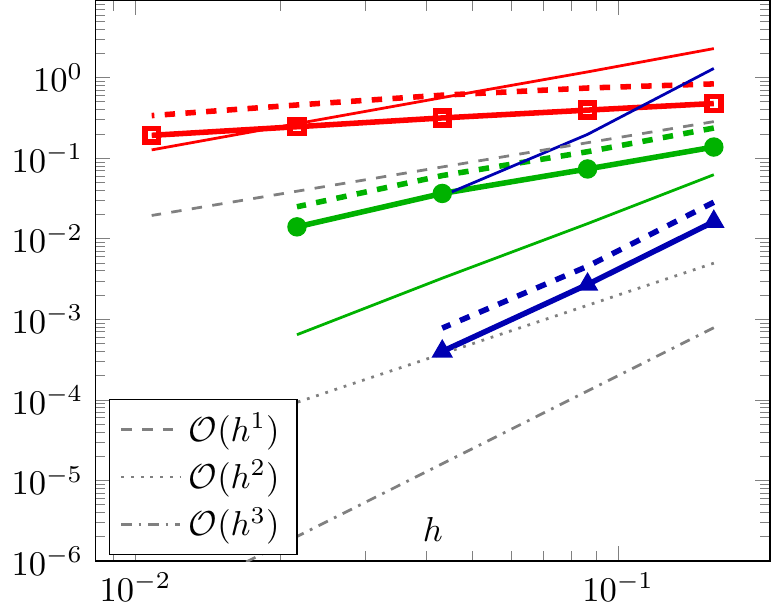}}%
			\\
			{(a)\ Errors for convex geometry in Fig. \ref{domaIn_2}(a)} \hspace{-1cm}&{(b) \ Errors for non-convex geometry in Fig. \ref{domaIn_2}(b)}
		\end{tabular}
	} \caption{{{Relative error for geometrical setup displayed in Fig. \ref{domaIn_2}. }}}\label{domain_3}
\end{figure}

\begin{figure}[ht] 
	\hspace{-0.7cm}
	\begin{subfigure}[b]{0.35\linewidth}
		\centering
		\includegraphics[width=0.75\linewidth]{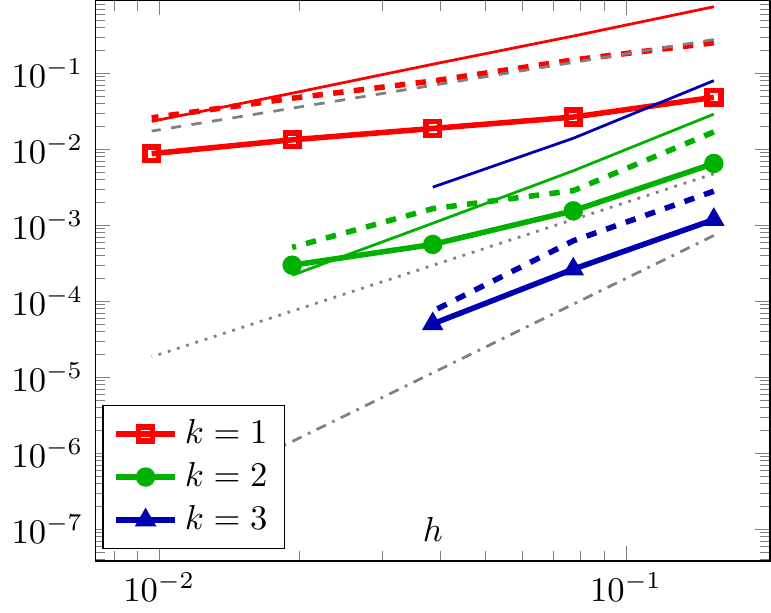} 
		\caption{$\theta=0$} 
	\end{subfigure}
	\hspace{-0.7cm}
	\begin{subfigure}[b]{0.35\linewidth}
		\centering
		\includegraphics[width=0.75\linewidth]{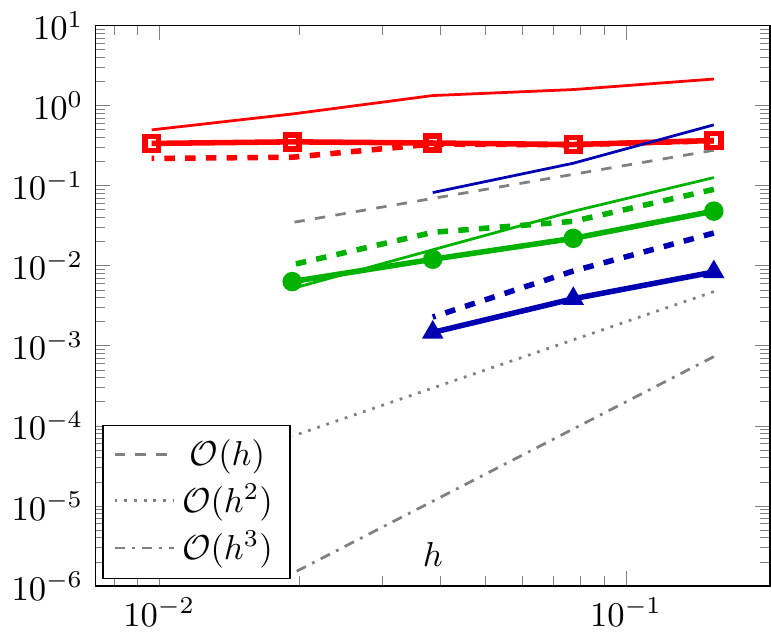} 
		\caption{$\theta=1$} 
	\end{subfigure} 
		\hspace{-0.7cm}
	\begin{subfigure}[b]{0.35\linewidth}
		\centering
		\includegraphics[width=0.75\linewidth]{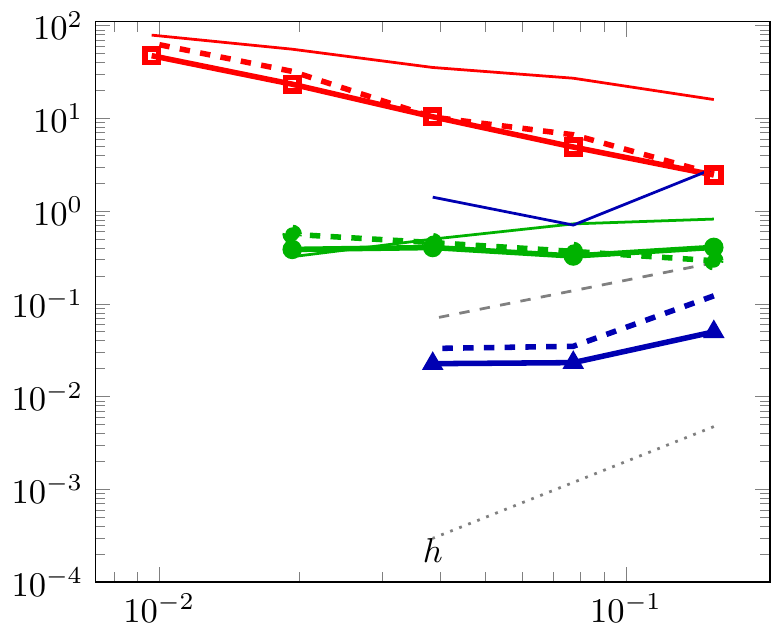} 
		\caption{$\theta=2$} 
	\end{subfigure}
	\caption{{ {Relative error for geometrical setup Fig. \ref{domaIn_2}(a) in terms of the strength of the data perturbation. }}}
	\label{fig_data_convex} 
\end{figure}

\begin{figure}[ht] 
	\hspace{-0.7cm}
	\begin{subfigure}[b]{0.35\linewidth}
		\centering
		\includegraphics[width=0.75\linewidth]{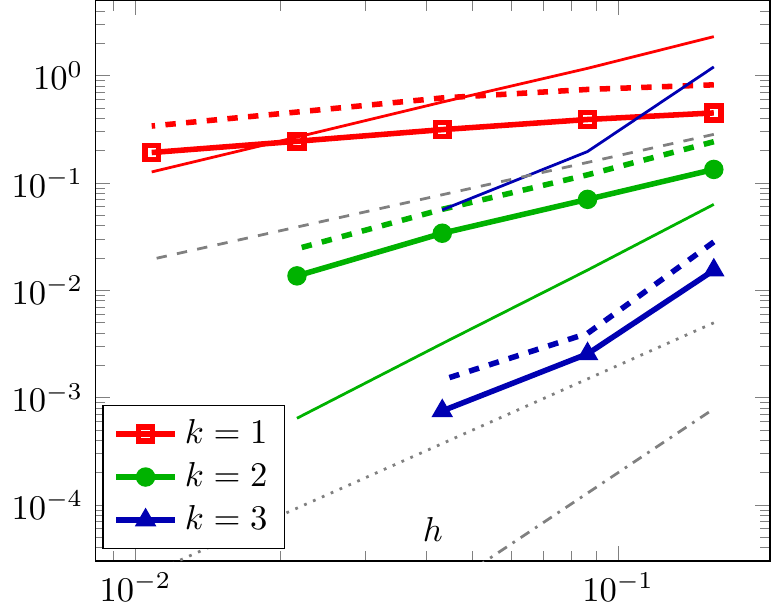} 
		\caption{$\theta=0$} 
	\end{subfigure}
	\hspace{-0.7cm}
	\begin{subfigure}[b]{0.35\linewidth}
		\centering
		\includegraphics[width=0.75\linewidth]{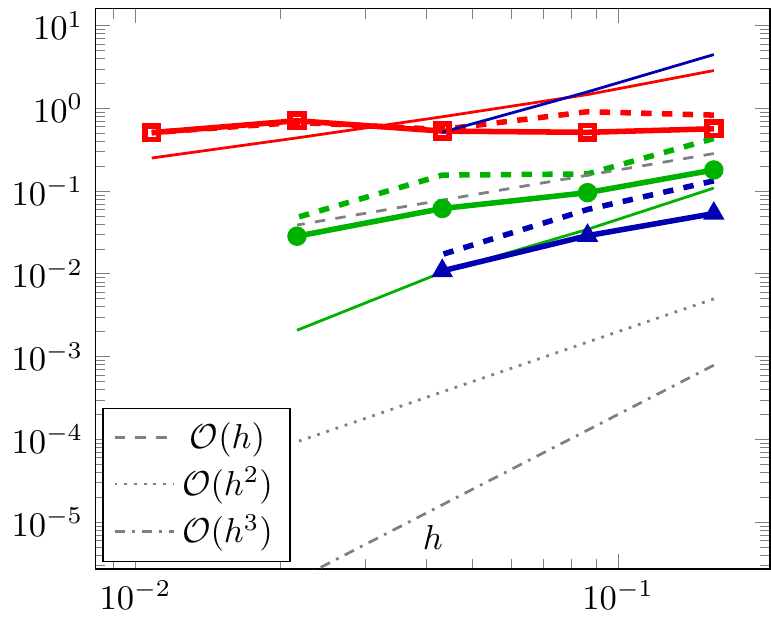} 
		\caption{$\theta=1$} 
	\end{subfigure} 
	\hspace{-0.7cm}
	\begin{subfigure}[b]{0.35\linewidth}
		\centering
		\includegraphics[width=0.75\linewidth]{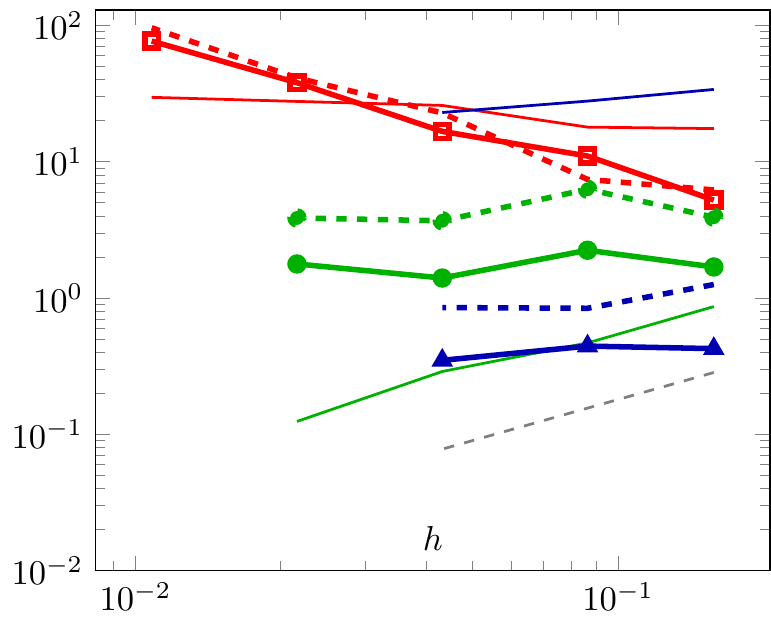} 
		\caption{$\theta=2$} 
	\end{subfigure}
	\caption{{ {Relative error for geometrical setup Fig. \ref{domaIn_2}(b) in terms of the strength of the data perturbation. }}}
	\label{fig_data_nonconvex} 
\end{figure}

\begin{figure}[!ht]
	\centerline{
		\begin{tabular}{cc} 
			\hspace{0cm}
			\resizebox*{6cm}{!}{\includegraphics{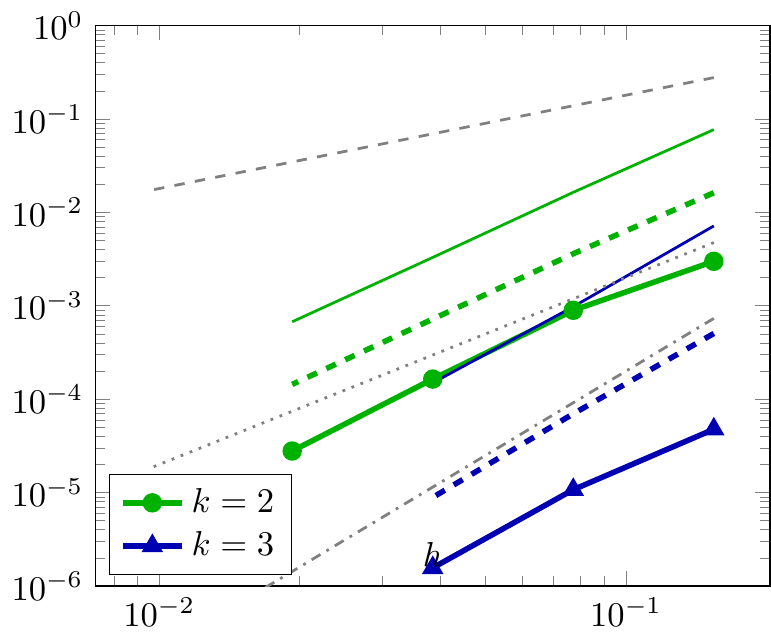}}%
			&\hspace{0cm}
			\resizebox*{6cm}{!}{\includegraphics{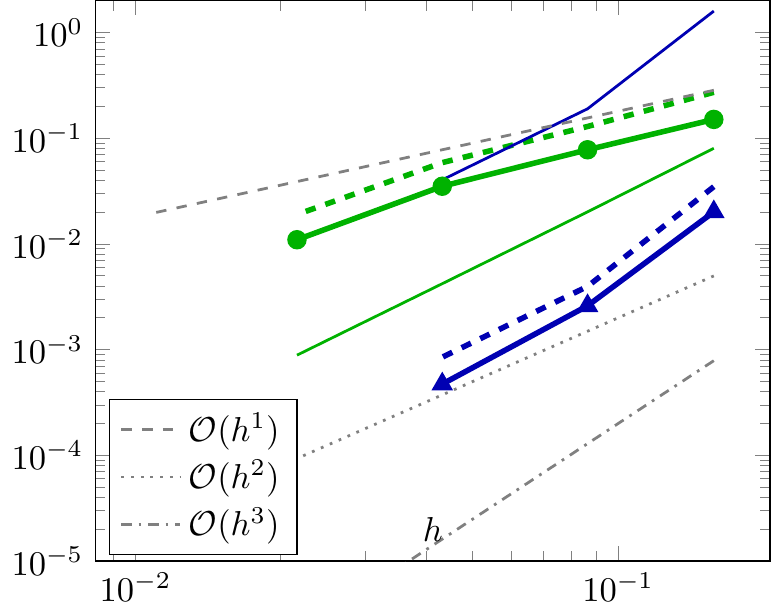}}%
			\\
			{(a)\ Errors for convex geometry in Fig. \ref{domaIn_2}(a)} \hspace{-1cm}&{(b) \ Errors for non-convex geometry in Fig. \ref{domaIn_2}(b)}
		\end{tabular}
	} \caption{{{Relative error with using the minimal polynomial order for geometrical setup displayed in Fig. \ref{domaIn_2}. }}}\label{domain_31}
\end{figure}

\begin{figure}[ht] 
	\hspace{-0.7cm}
	\begin{subfigure}[b]{0.35\linewidth}
		\centering
		\includegraphics[width=0.75\linewidth]{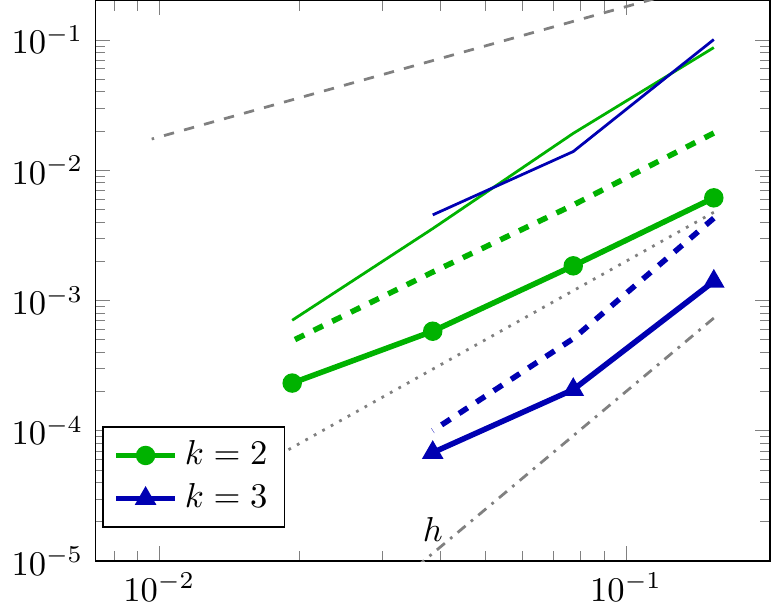} 
		\caption{$\theta=0$} 
	\end{subfigure}
	\hspace{-0.7cm}
	\begin{subfigure}[b]{0.35\linewidth}
		\centering
		\includegraphics[width=0.75\linewidth]{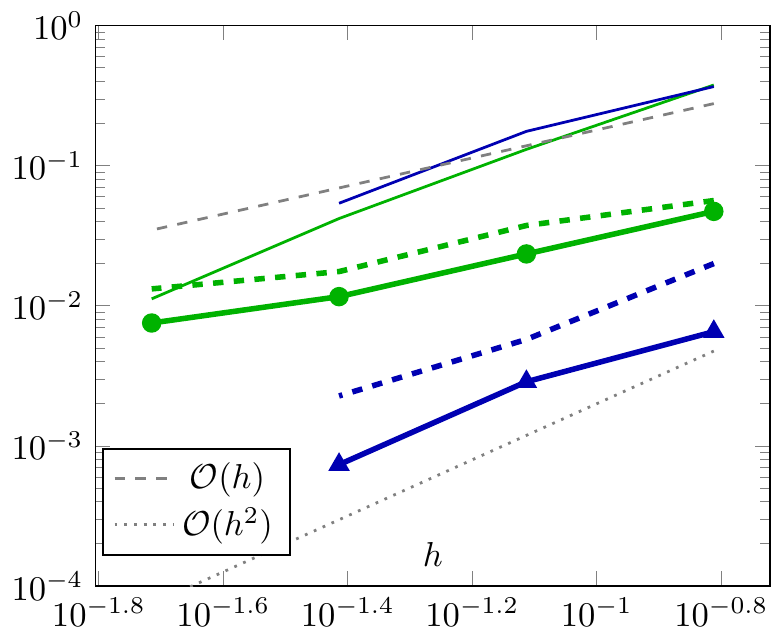} 
		\caption{$\theta=1$} 
	\end{subfigure} 
		\hspace{-0.7cm}
	\begin{subfigure}[b]{0.35\linewidth}
		\centering
		\includegraphics[width=0.75\linewidth]{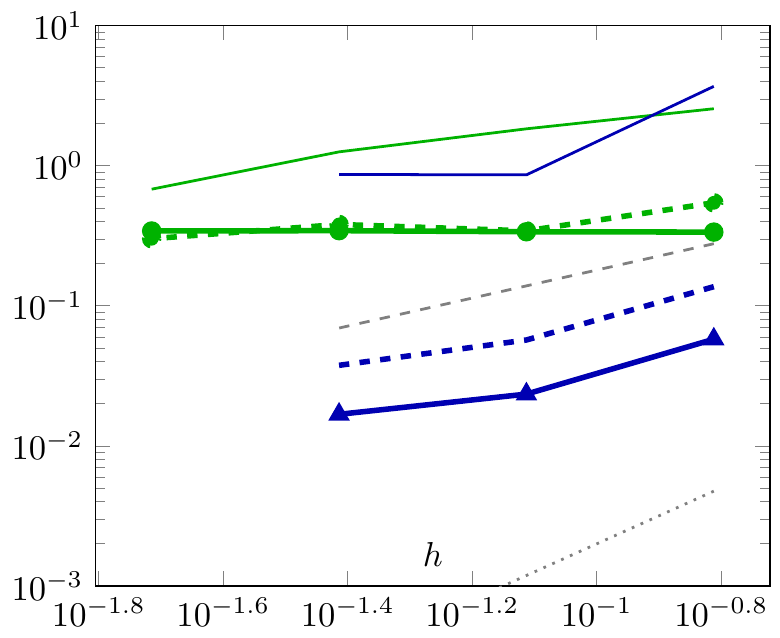} 
		\caption{$\theta=2$} 
	\end{subfigure}
	\caption{{ {Relative error with using the minimal polynomial order for geometrical setup Fig. \ref{domaIn_2}(a) in terms of the strength of the data perturbation. }}}
	\label{fig_data_convex_1} 
\end{figure}

\begin{figure}[ht] 
	\hspace{-0.7cm}
	\begin{subfigure}[b]{0.35\linewidth}
		\centering
		\includegraphics[width=0.75\linewidth]{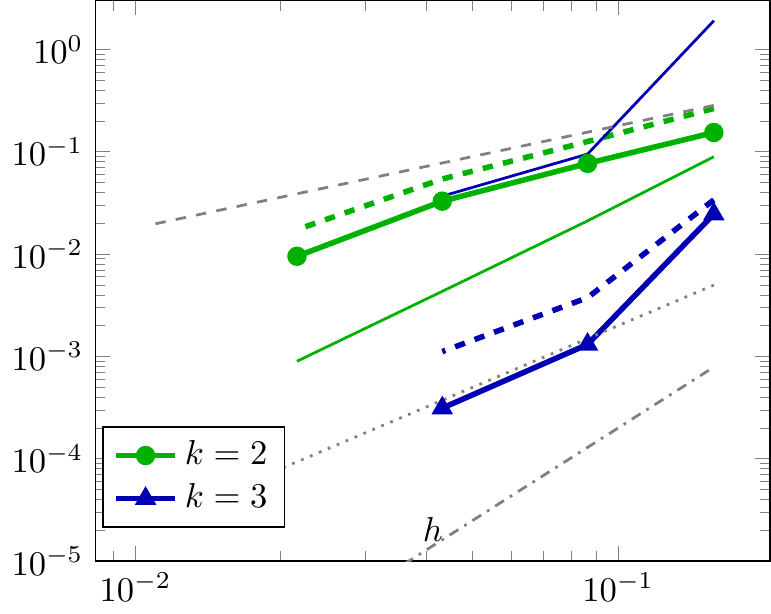} 
		\caption{$\theta=0$} 
	\end{subfigure}
	\hspace{-0.7cm}
	\begin{subfigure}[b]{0.35\linewidth}
		\centering
		\includegraphics[width=0.75\linewidth]{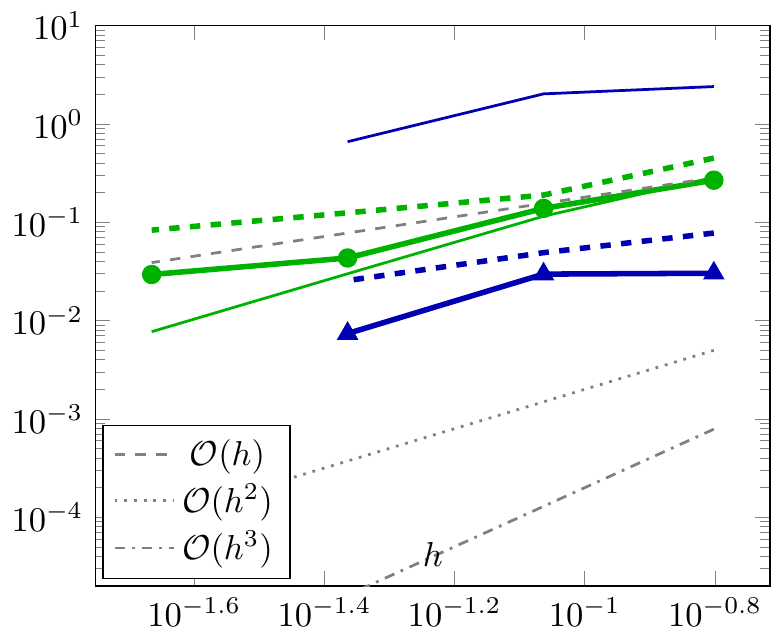} 
		\caption{$\theta=1$} 
	\end{subfigure} 
	\hspace{-0.7cm}
	\begin{subfigure}[b]{0.35\linewidth}
		\centering
		\includegraphics[width=0.75\linewidth]{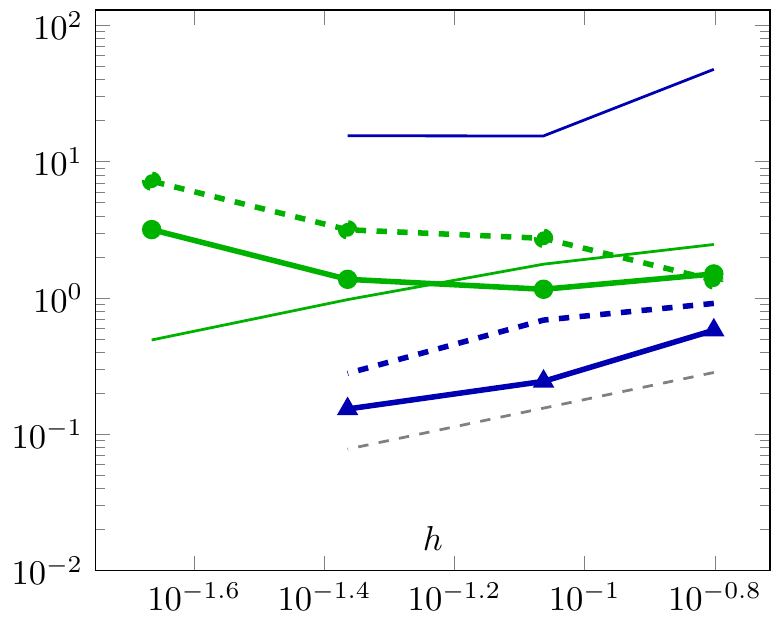} 
		\caption{$\theta=2$} 
	\end{subfigure}
	\caption{{ {Relative error with using the minimal polynomial order for geometrical setup Fig. \ref{domaIn_2}(b) in terms of the strength of the data perturbation. }}}
	\label{fig_data_nonconvex_1} 
\end{figure}
\subsection{Convergence study with varying viscosity}
In this subsection, we consider the flow of a viscous Newtonian fluid between two solid boundaries at $y = H,-H$ driven by a constant pressure gradient. The source term $f$ is chosen such that the solution of the plane Poiseuille flow 
\begin{align*}
	{{ {u}}}(x,y)=U(x,y)&=\left(\frac{P}{2 \mu}(H^2-y^2),0 \right),\\
	p(x,y)&=\left(\frac{1}{2}-x \right)P,
\end{align*} 
satisfies the model problem.
We demonstrate the performance of the numerical method for varying viscosity in a domain where the target subdomain is aligned with the flow, shown in Figure \ref{fig_new}, and defined by
\begin{align}\label{downstram}
  \omega_M = (0.0, 0.2) \times (0.2, 0.8), \ \ B =  (0.2, 0.8)  \times (0.45, 0.55).  
\end{align}
As in the previous section, we have examined the convergence of the method by performing numerical tests on both unperturbed and perturbed data. We vary the viscosity between $\nu=1$ and $\nu=0$. Observe that since no boundary conditions are imposed nothing needs to be changed in the formulation in the singular limit. Also note that the choice of $\xi_T$ and $\xi_F$ in \eqref{eq:Sg}-\eqref{eq:Sh} mimicks the choice for the stabilized method for (the well-posed) Oseen's problem used to improve robustness in the high Reynolds limit. Also with reference to high Reynolds computations for the well-posed case we here consider equal order interpolation for all fields. 

We wish to explore if the results on stability for the unique continuation for convection--diffusion equations in the limit of small diffusivity \cite{BNO20,BNO22} carry over to the case of incompressible flow. The key observation there was that for smooth solutions to the convection--diffusion equation the method had H\"older stable error estimates when diffusion dominates, similar to the analysis above, but in the convection dominated regime the stability in a subdomain slightly smaller than that spanned by the characteristics intersecting the data zone is Lipschitz. In that zone the convergence for the ill-posed problem coincides with that of the well-posed problem for piecewise affine approximation. 
As a means to study the effect of incompressibility we compare with the case where in addition to $u$ in $\omega_M$, $p$ is also provided as data in $\Omega$. The proposed method can be modified to accommodate this case by including $\frac{1}{2}\norm{p_h-p}_{\Omega}$,  as an additional term in the Lagrangian (\ref{eq4}). Note that when the pressure is added the velocity pressure coupling is strongly reduced. The relative $ L^2$-errors for running the same problem as above are displayed in Fig. \ref{fig_rel_error_1a}. Left side plots of Fig. \ref{fig_rel_error_1a} show the results without adding any additional pressure term, and right side plots of Fig. \ref{fig_rel_error_1a} display the results  by including the pressure data. We observe that  the results with pressure information are consistently better than those without. In particular for high order polynomials and high Reynolds number the information on the pressure appears to provide a very strong enhancement of the stability.
Further, the effect of data perturbations for different values of the viscosity coefficient is studied with and without the pressure augmentation, see Figs.  \ref{fig_data_downstream_1}--\ref{fig_data_downstream_1-1}.  We observe  that if  a priori information on the pressure is added and viscosity is reduced the convergence order for the relative $L^{2}$-error increases. This is consistent with the results of \cite{BNO20,BNO22}. If the pressure is not added however we do not observe this effect and it appears from these computational examples that we can not expect the result from \cite{BNO22} to hold for linearized incompressible flow.

In Fig. \ref{fig_data_downstream_1}-\ref{fig_data_downstream_1-1} we present the results under perturbations of data. These results show that the robustness under perturbations is also substantially enhanced if the pressure is known, indicating that the pressure velocity coupling introduces a strong sensitivity to perturbations.

\section{Conclusions}
We have introduced a finite element data assimilation method for the linearized Navier-Stokes' equation. We proved the natural extension of the error estimates  of \cite{Boulakia:2020:Burman} valid for piecewise affine approximation to the case of arbitrary polynomial orders. The expected increase in convergence rate was obtained, but the estimates also show that the sensitivity of the system to perturbations in data increase. The theoretical results were validated on some academic test cases. The main observations are that high order approximation for the ill-posed linearized Navier-Stokes' equations pays off, at least for sufficiently clean data. The spaces for the dual variables on the other hand can be chosen with piecewise affine approximation without loss of accuracy of the approximation. A study where the viscosity was varied showed that the incompressibility condition and the associated velocity-pressure coupling severely compromise the convective Lipschitz stability that is known to hold in the zone in the domain defined by points on the characteristics intersecting the data zone. If additional data in the form of global pressure measurements were added the results improved and were similar to the those of the scalar convection--diffusion equation. 

Future work will focus on the nonlinear case and the possibility of enhancing stability by adding knowledge of some other variable than the pressure, such as for example a passive tracer as in scalar image velocimetry \cite{BGO20}.

\begin{figure}[!ht]
	\centerline{
		\begin{tabular}{cc}
			\hspace{0cm}
			\resizebox*{6cm}{!}{\includegraphics{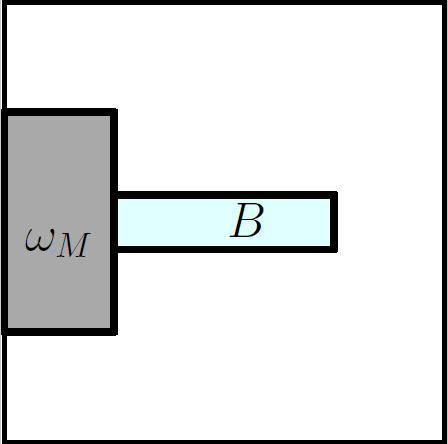}}%
		\end{tabular}
	} \caption{{{Data set $\omega_M$  and error measurement regions (B).}}} \label{fig_new}
\end{figure}

\begin{figure}[ht]

	\begin{subfigure}[b]{0.5\linewidth}
		\centering
		\includegraphics[width=0.75\linewidth]{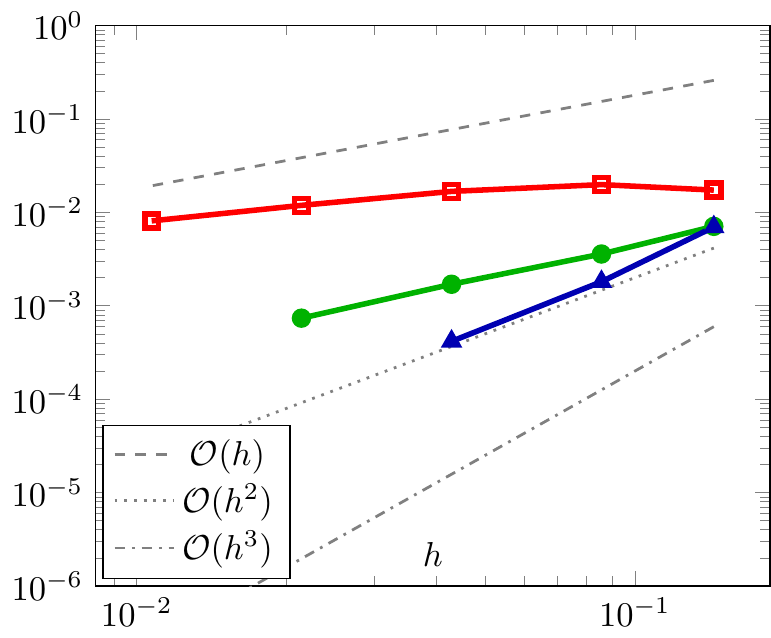} 
		\caption{ without pressure, $\nu=10^0$} 
	\end{subfigure}
	\begin{subfigure}[b]{0.5\linewidth}
		\centering
		\includegraphics[width=0.75\linewidth]{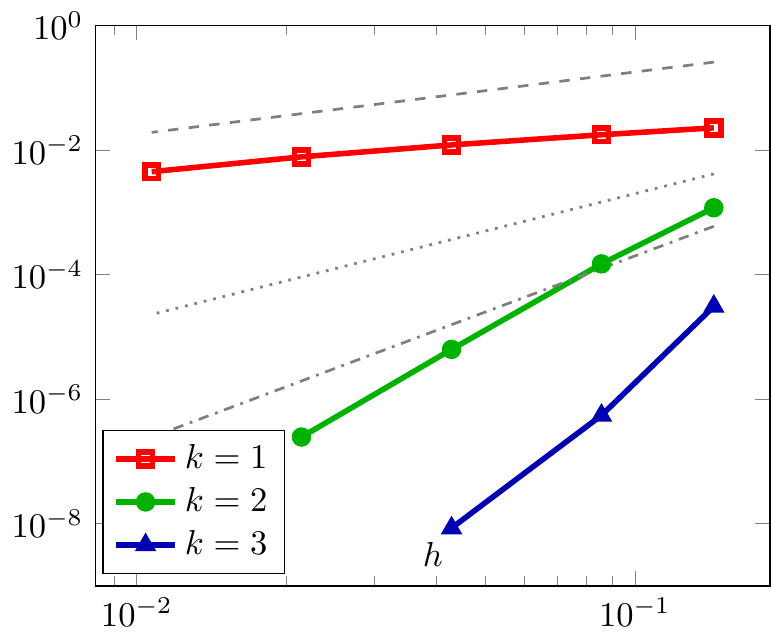} 
		\caption{ with pressure, $\nu=10^0$} 
	\end{subfigure}
	
	\begin{subfigure}[b]{0.5\linewidth}
		\centering
		\includegraphics[width=0.75\linewidth]{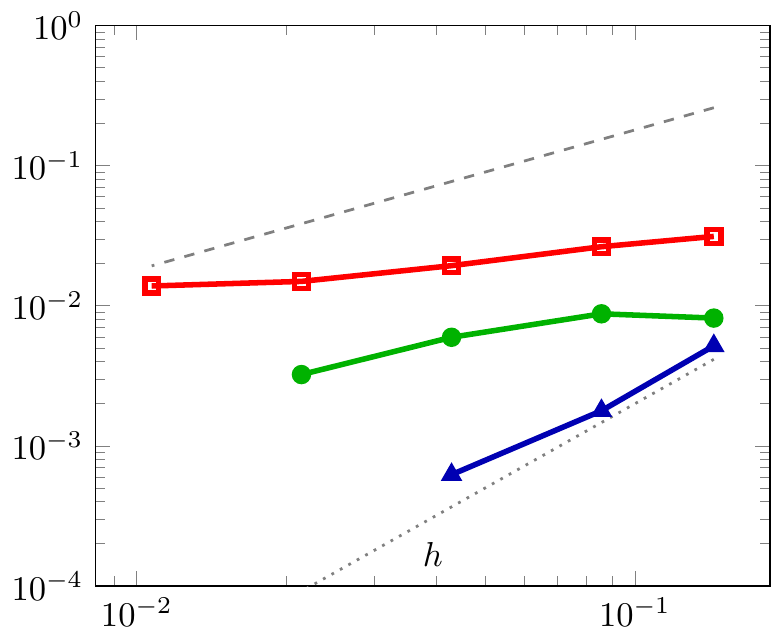} 
		\caption{ without pressure, $\nu=10^{-2}$} 
	\end{subfigure}
	\begin{subfigure}[b]{0.5\linewidth}
		\centering
		\includegraphics[width=0.75\linewidth]{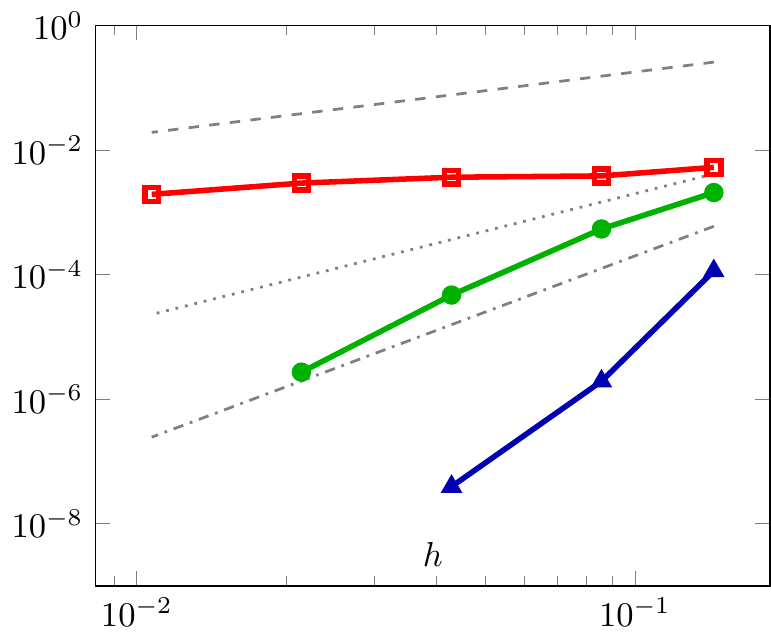} 
		\caption{ with pressure, $\nu=10^{-2}$} 
	\end{subfigure}

	\begin{subfigure}[b]{0.5\linewidth}
		\centering
		\includegraphics[width=0.75\linewidth]{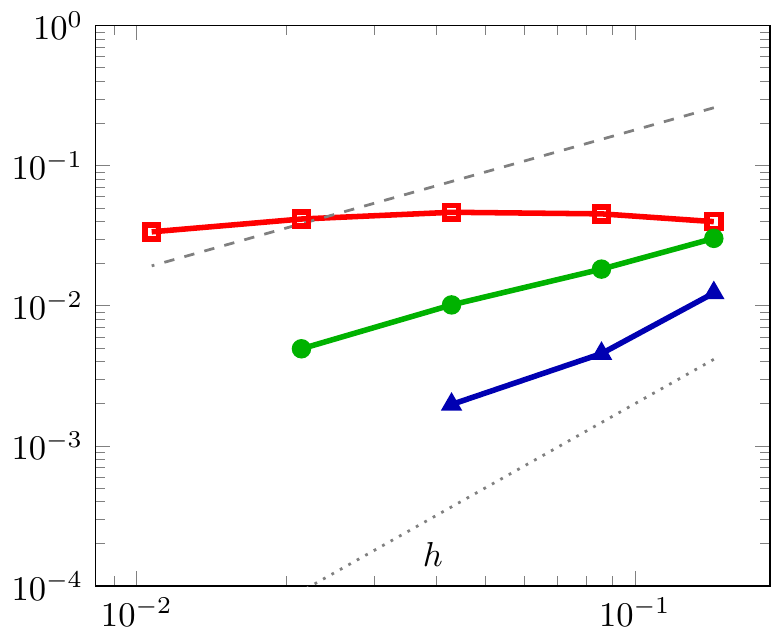} 
		\caption{ without pressure, $\nu=10^{-4}$} 
	\end{subfigure}
	\begin{subfigure}[b]{0.5\linewidth}
		\centering
		\includegraphics[width=0.75\linewidth]{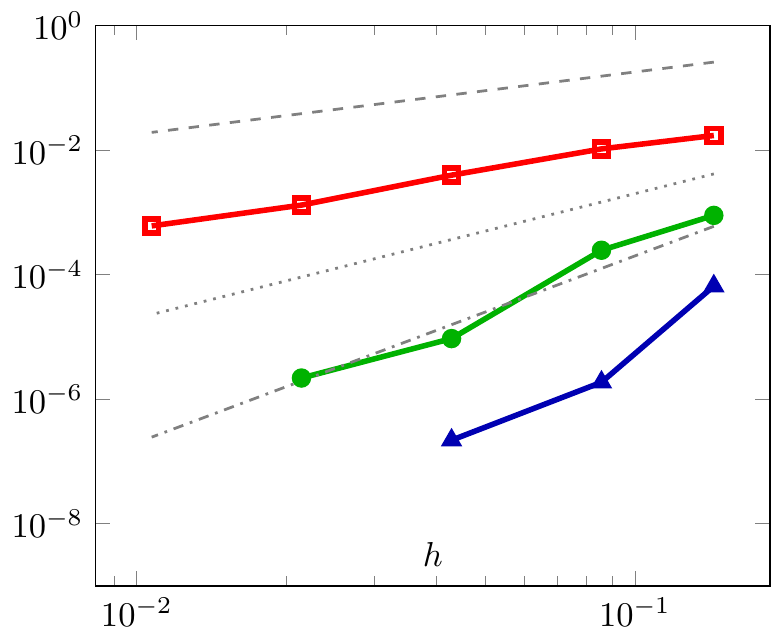} 
		\caption{ with pressure, $\nu=10^{-4}$} 
	\end{subfigure}
	
	\begin{subfigure}[b]{0.5\linewidth}
		\centering
		\includegraphics[width=0.75\linewidth]{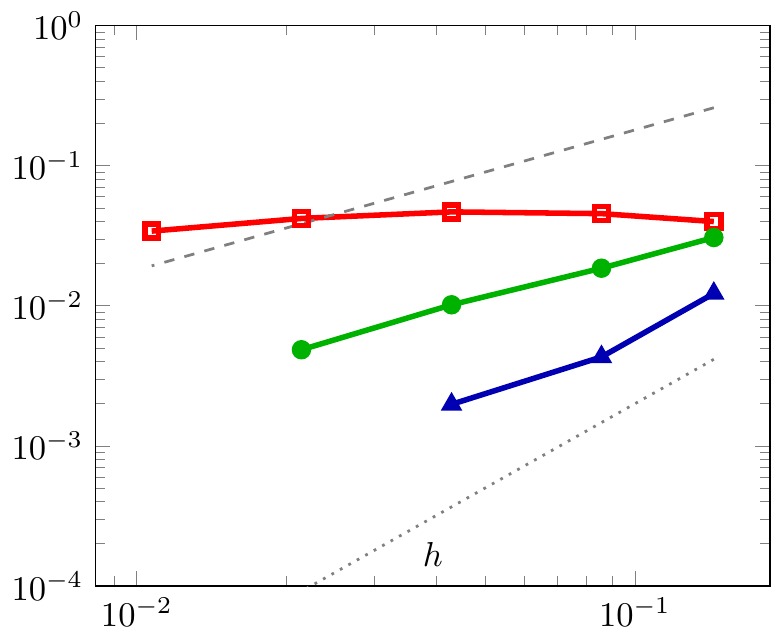} 
		\caption{ without pressure, $\nu=0$} 
	\end{subfigure}
	\begin{subfigure}[b]{0.5\linewidth}
		\centering
		\includegraphics[width=0.75\linewidth]{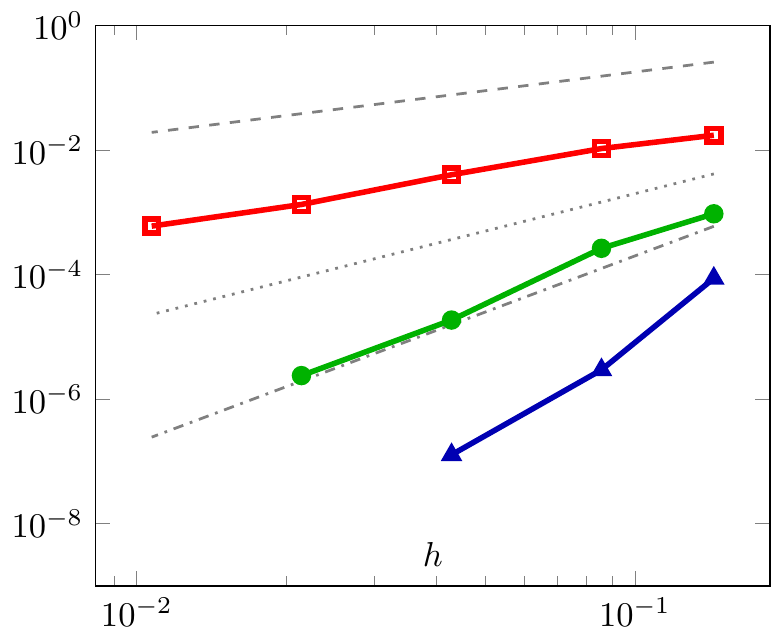} 
		\caption{ with pressure, $\nu=0$} 
	\end{subfigure}
	
	\caption{{ {Relative error for geometrical setup Fig. \ref{fig_new}. }}} \label{fig_rel_error_1a}
\end{figure}

	%
	%
	%
	%
	%
%

\begin{figure}[ht]

	\begin{subfigure}[b]{0.49\linewidth}
		\centering
		\includegraphics[width=0.75\linewidth]{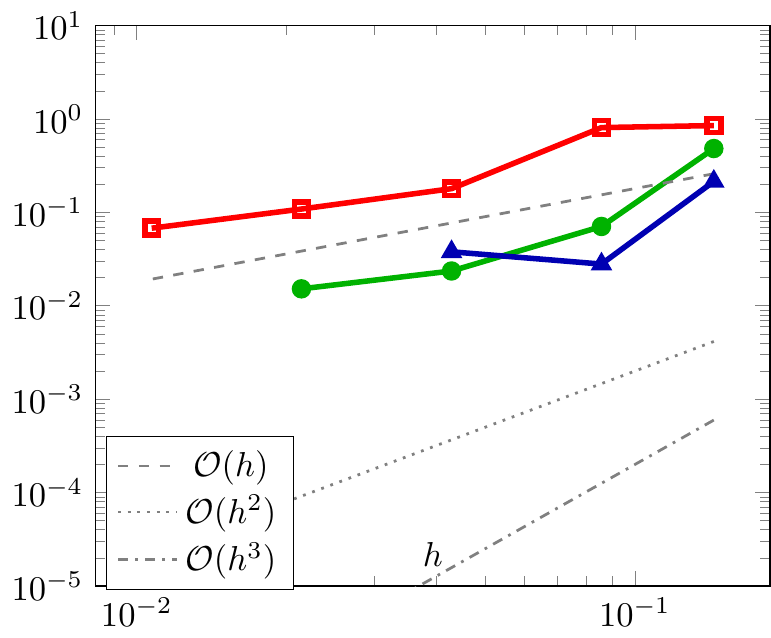} 
		\caption{without pressure $\theta=0$, $\nu=10^0$} 
	\end{subfigure}
 \begin{subfigure}[b]{0.49\linewidth}
		\centering
		\includegraphics[width=0.75\linewidth]{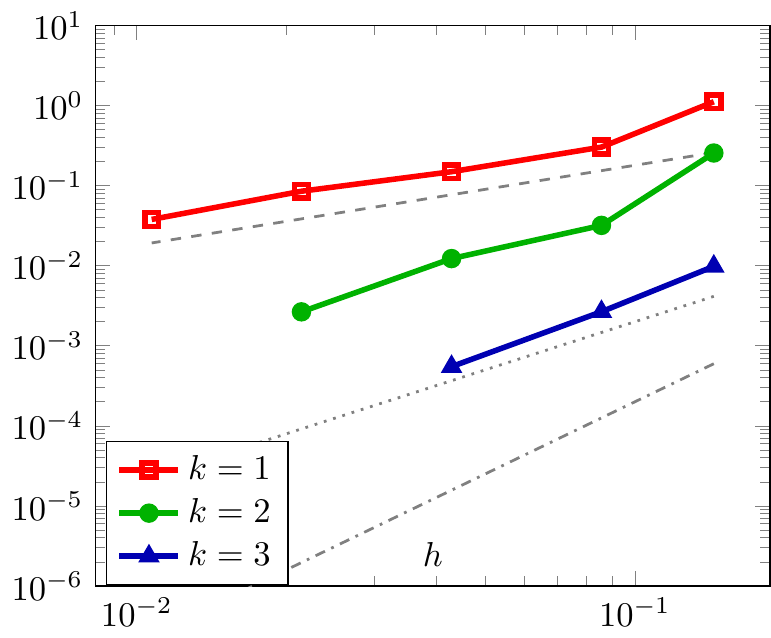} 
		\caption{with pressure $\theta=0$, $\nu=10^0$} 
	\end{subfigure}
	
 \begin{subfigure}[b]{0.49\linewidth}
		\centering
		\includegraphics[width=0.75\linewidth]{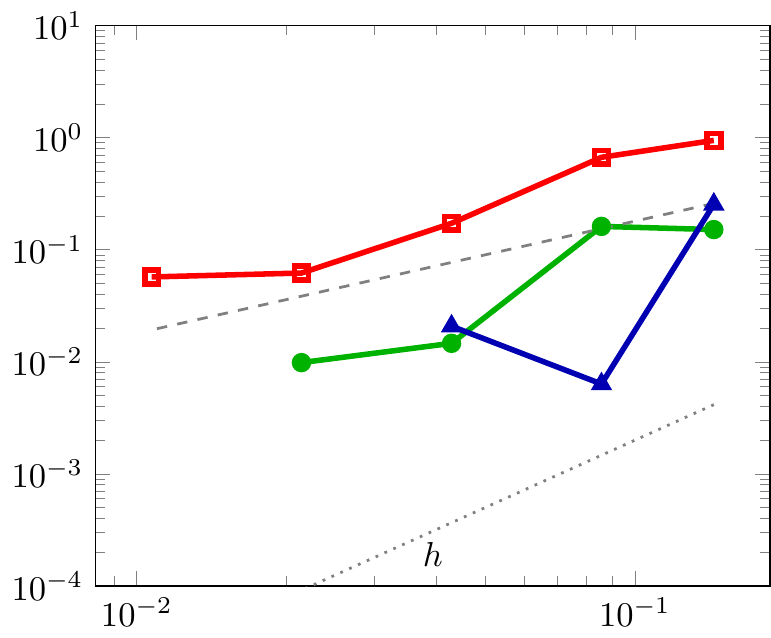} 
		\caption{without pressure  $\theta=0$, $\nu=10^{-2}$} 
	\end{subfigure} 
 \begin{subfigure}[b]{0.49\linewidth}
		\centering
		\includegraphics[width=0.75\linewidth]{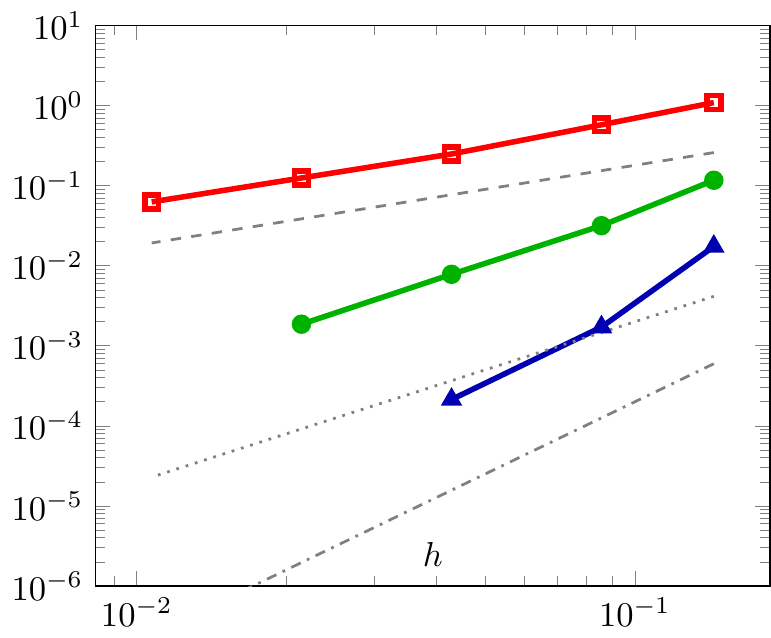} 
		\caption{with pressure $\theta=0$, $\nu=10^{-2}$} 
	\end{subfigure}
	
 \begin{subfigure}[b]{0.49\linewidth}
		\centering
		\includegraphics[width=0.75\linewidth]{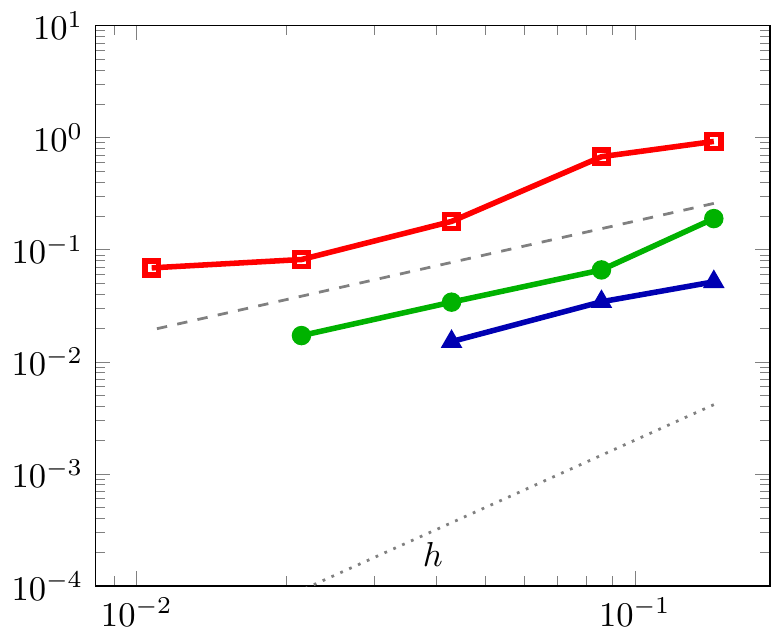} 
		\caption{without pressure  $\theta=0$, $\nu=10^{-4}$} 
	\end{subfigure}
	\begin{subfigure}[b]{0.49\linewidth}
		\centering
		\includegraphics[width=0.75\linewidth]{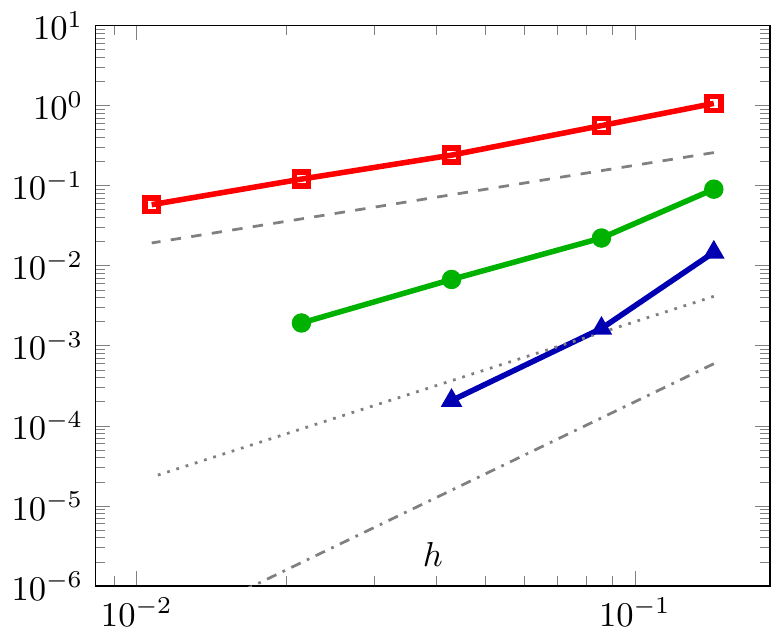} 
		\caption{with pressure $\theta=0$, $\nu=10^{-4}$} 
	\end{subfigure}

\begin{subfigure}[b]{0.49\linewidth}
		\centering
		\includegraphics[width=0.75\linewidth]{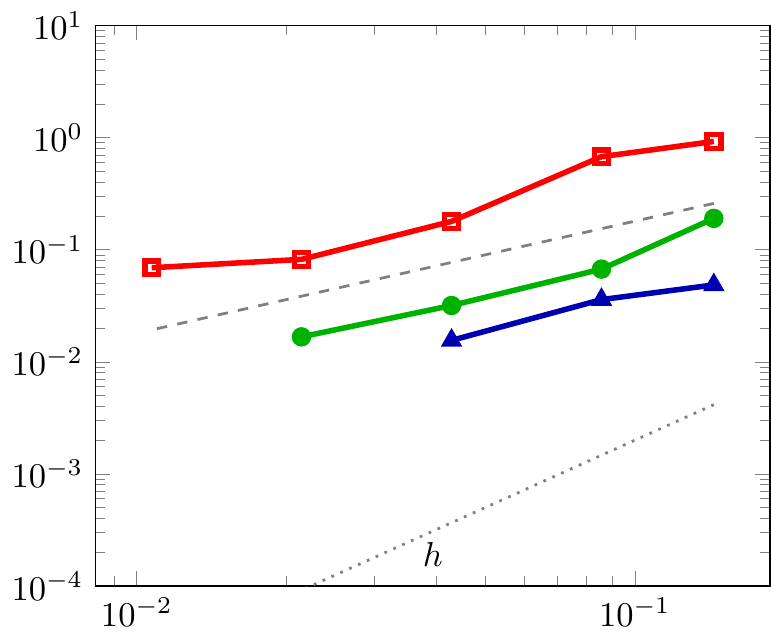} 
		\caption{without pressure $\theta=0$, $\nu=0$} 
	\end{subfigure}
\begin{subfigure}[b]{0.49\linewidth}
		\centering
		\includegraphics[width=0.75\linewidth]{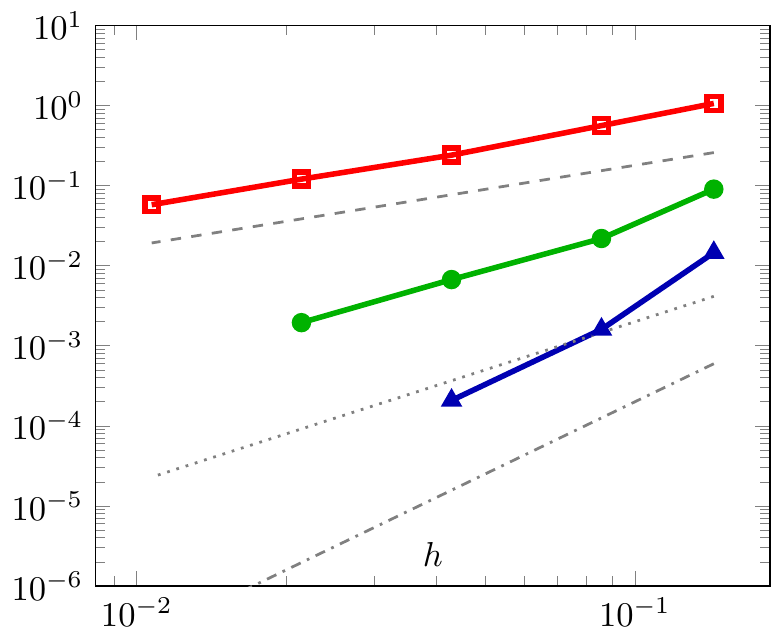} 
		\caption{with pressure $\theta=0$, $\nu=0$} 
	\end{subfigure}

	\caption{{ {Relative errors  in terms of the strength of the data perturbation for geometrical setup displayed in Fig. \ref{fig_new}. }}}
	\label{fig_data_downstream_1} 
\end{figure}

\begin{figure}[ht]

	\begin{subfigure}[b]{0.5\linewidth}
		\centering
		\includegraphics[width=0.75\linewidth]{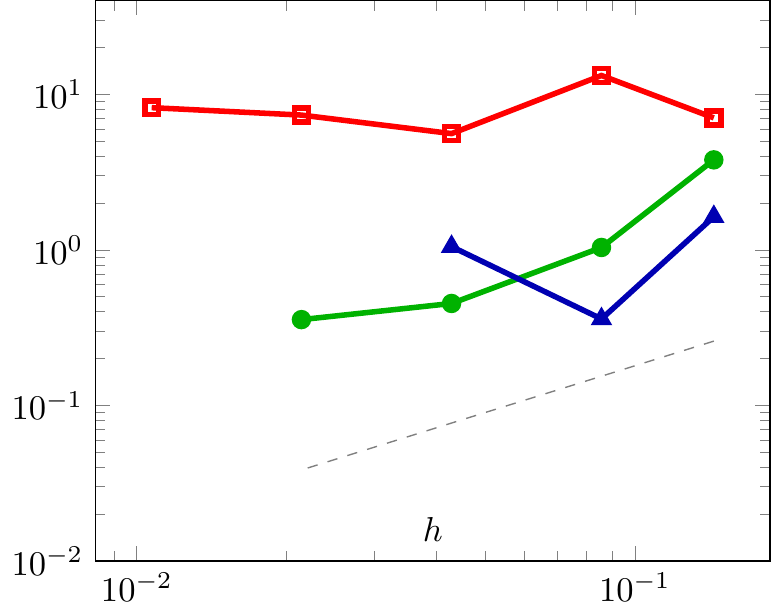} 
		\caption{without pressure $\theta=1$, $\nu=1$} 
	\end{subfigure}
 \begin{subfigure}[b]{0.5\linewidth}
		\centering
		\includegraphics[width=0.75\linewidth]{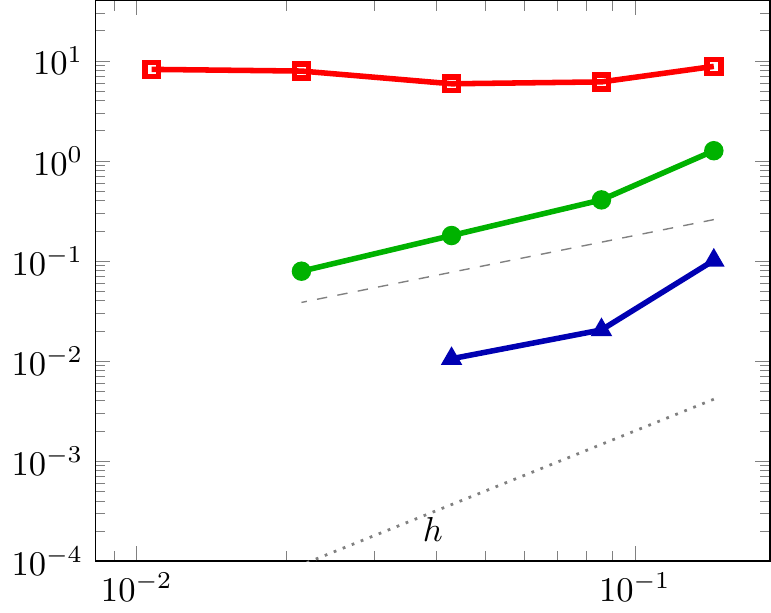} 
		\caption{with pressure $\theta=1$, $\nu=1$} 
	\end{subfigure}

 \begin{subfigure}[b]{0.5\linewidth}
		\centering
		\includegraphics[width=0.75\linewidth]{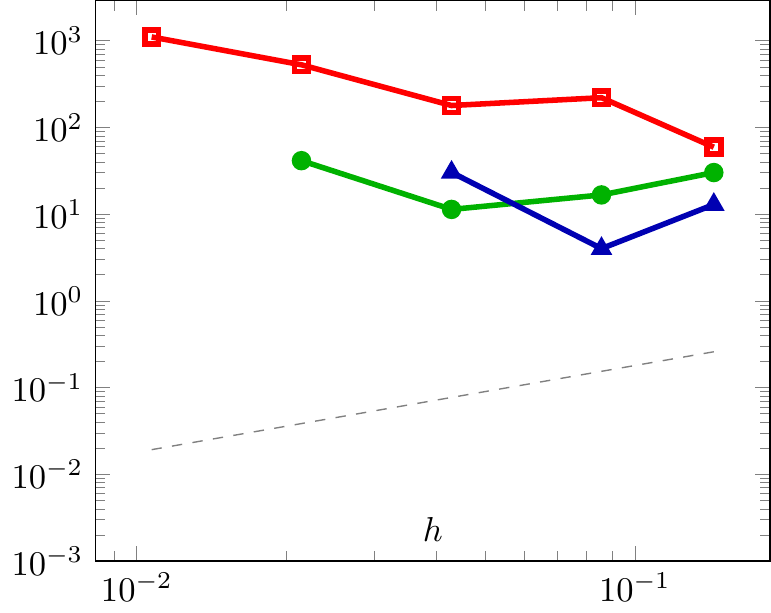} 
		\caption{without pressure $\theta=2$, $\nu=1$} 
	\end{subfigure}
 \begin{subfigure}[b]{0.5\linewidth}
		\centering
		\includegraphics[width=0.75\linewidth]{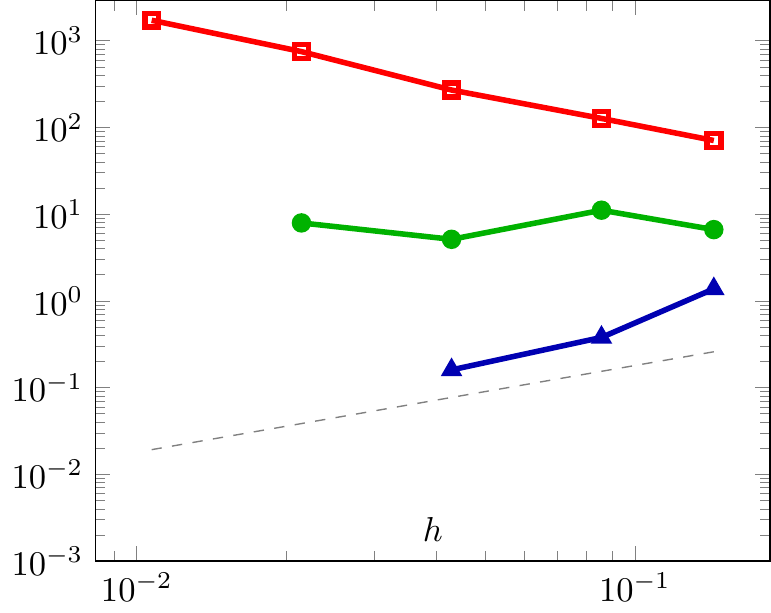} 
		\caption{with pressure $\theta=2$, $\nu=1$} 
	\end{subfigure}
	\caption{{ {Relative errors  in terms of the strength of the data perturbation  for geometrical setup displayed in  Fig. \ref{fig_new}. }}}
	\label{fig_data_downstream_1-1} 
\end{figure}

\section*{Acknowledgment}

This research was funded by EPSRC grants EP/T033126/1 and EP/V050400/1.


\bibliographystyle{plain}
\bibliography{bibtexexample}

\end{document}